\newcommand{\SE}{\scriptscriptstyle\mathrm{SE}}
\newcommand{\DE}{\scriptscriptstyle\mathrm{DE}}
\newcommand{\domD}{\mathscr{D}}
\newcommand{\RR}{\mathbb{R}}
\newcommand{\CC}{\mathbb{C}}
\theoremstyle{plain}
\newtheorem{theorem}{Theorem}[section]
\newtheorem{lemma}[theorem]{Lemma}
\newtheorem{proposition}[theorem]{Proposition}
\theoremstyle{definition}
\newtheorem{definition}{Definition}[section]
\theoremstyle{remark}
\newcommand{\dd}{\,\mathrm{d}}
\DeclareMathOperator{\ii}{i}
\DeclareMathOperator{\ee}{e}
\DeclareMathOperator{\artanh}{artanh}
\DeclareMathOperator{\OO}{O}
\DeclareMathOperator{\sinc}{sinc}
\DeclareMathOperator{\Si}{Si}
\DeclareMathOperator{\diag}{diag}
\renewcommand{\Im}{\operatorname{Im}}
\begin{document}

\title{Yet another DE-Sinc indefinite integration formula}
\author{Tomoaki Okayama\footnote{Graduate School of Information Sciences,
Hiroshima City University}~~and Ken'ichiro Tanaka\footnote{Graduate School of
Information Science and Technology, The University of Tokyo}}



\maketitle


\begin{abstract}
Based on the Sinc approximation
combined with the tanh transformation,
Haber derived an approximation formula
for numerical indefinite integration
over the finite interval (-1, 1).
The formula uses a special function for the basis functions.
In contrast,
Stenger derived another formula,
which does not use any special function but does
include a double sum.
Subsequently, Muhammad and Mori proposed a formula,
which replaces
the tanh transformation with the double-exponential transformation
in Haber's formula.
Almost simultaneously, Tanaka et al.\ proposed another formula,
which was based on the same replacement in Stenger's formula.
As they reported, the replacement drastically improves
the convergence rate of Haber's and Stenger's formula.
In addition to the formulas above,
Stenger derived yet another indefinite integration formula
based on the
Sinc approximation combined with the tanh transformation,
which has an elegant matrix-vector form.
In this paper, we propose the replacement of
the tanh transformation with the double-exponential transformation
in Stenger's second formula.
We provide a theoretical analysis as well as a numerical comparison.
\end{abstract}


\section{Introduction}
\label{sec:intro}

Haber~\cite{Haber} derived a numerical indefinite integration formula
of the form
\begin{equation}
 \int_{-1}^{x} f(s) \dd{s}
\approx \sum_{j=-n}^n f(x_j) w_j(x),\quad x\in (-1, 1),
\label{eq:general-form-indef}
\end{equation}
where the weight function $w_j$ depends on $x$,
but the sampling node $x_j$ does not.
Furthermore, the formula can attain a root-exponential convergence:
$\OO(\exp(-c \sqrt{n}))$, even if the integrand $f$
has an endpoint singularity such as $f(s) = 1/\sqrt{1 - s^2}$.
Owing to these features,
the formulas have been used in many applications,
such as initial value problems~\cite{Stenger},
Volterra integral equations~\cite{Rashidinia},
and Volterra integro-differential equations~\cite{Zarebnia}.

Haber's formula is derived as follows.
The Sinc approximation over the real axis is expressed as
\[
 F(u) \approx \sum_{j=-n}^n F(jh) \sinc\left(\frac{u - jh}{h}\right),
\quad u\in\RR,
\]
where $\sinc(x)$ is the normalized sinc function,
and $h$ is the mesh size appropriately selected depending on $n$.
By integrating both sides of the Sinc approximation,
we have
\begin{equation}
 \int_{-\infty}^{t}F(u)\dd u
\approx \sum_{j=-n}^n F(jh) J(j,h)(t),
\quad t\in\RR,
\label{eq:sinc-indef}
\end{equation}
where
\[
 J(j,h)(t)=\int_{-\infty}^{t}\sinc\left(\frac{u - jh}{h}\right)\dd u
=\frac{h}{\pi}\left\{\frac{\pi}{2}+\Si\left(\frac{\pi(t -jh)}{h}\right)\right\}.
\]
Here, $\Si(x)$ is the sine integral defined by
$\Si(x) = \int_0^x \{(\sin t)/t\}\dd t$.
The approximation~\eqref{eq:sinc-indef} is called
the Sinc indefinite integration.
In the case of the indefinite integral over the finite interval $(-1, 1)$,
Haber employed the tanh transformation
\[
 s = \psi(u) = \tanh\left(\frac{u}{2}\right),
\]
and applied~\eqref{eq:sinc-indef} as
\begin{equation}
 \int_{-1}^x f(s)\dd s
= \int_{-\infty}^{\psi^{-1}(x)} f(\psi(u))\psi'(u)\dd u
\approx
\sum_{j=-n}^n f(\psi(jh))\psi'(jh) J(j, h)(\psi^{-1}(x)),\quad x \in (-1, 1).
\label{eq:Haber-A}
\end{equation}
In this paper, we refer to this approximation as formula (SE1).

Although the formula is simple and efficient as described above,
it involves a drawback related to the basis function $J(j,h)$,
which includes a special function $\Si(x)$.
The routine of $\Si(x)$ is not always available,
and has a higher computational cost than that of elementary functions.
In order to remedy this drawback,
introducing an auxiliary function $\eta(x)=(1+x)/2$,
%
Stenger~\cite{Stenger} derived another formula
\begin{equation}
\int_{-1}^x f(s)\dd s
\approx \mathcal{I}_{\SE{}2}(f,x):=
 h \sum_{i=-n}^n \left\{\sum_{j=-n}^n
\left( f(\psi(jh))
-
\frac{1}{2}
 I^{\ast}_{\psi}
\right)\psi'(jh)
\delta^{(-1)}_{ij}\right\}
\sinc\left(\frac{\psi^{-1}(x) - ih}{h}\right)
+ I^{\ast}_{\psi} \eta(x),
\label{eq:Stenger-B}
\end{equation}
where
\begin{align*}
 \sigma_k &= \int_{0}^{k} \sinc(x)\dd x = \frac{1}{\pi}\Si(\pi k),\\
 I^{\ast}_{\psi} &= h \sum_{k=-n}^n f(\psi(kh))\psi'(kh),
\end{align*}
and $\delta^{(-1)}_{ij} = 1/2 + \sigma_{i-j}$.
We refer to the approximation~\eqref{eq:Stenger-B} as formula (SE2).
In view of~\eqref{eq:Stenger-B},
we need not compute any special function
(note that the value table of $\sigma_k$ is
available~\cite[Table~1.10.1]{Stenger}).
In contrast, however, a double sum exists, which increases the computational cost
compared to single sum.
Therefore, which formula performs better remains unclear.
We note that Haber~\cite{Haber} also derived a similar formula
which uses only elementary functions for the basis functions,
but includes a double sum.
Because those two formulas are essentially the same,
we concentrate on the formula (SE2) in this work.

After a decade, as an improvement of (SE1),
Muhammad and Mori~\cite{MuhammadM} proposed
the replacement of the tanh transformation used in~\eqref{eq:Haber-A}
with the double-exponential (DE) transformation
\[
 s = \phi(u) = \tanh\left(\frac{\pi}{2}\sinh u\right),
\]
and derived the following formula
\begin{equation}
 \int_{-1}^x f(s)\dd s
\approx
\sum_{j=-n}^n f(\phi(jh))\phi'(jh) J(j, h)(\phi^{-1}(x)).
\label{eq:Muhammad-A}
\end{equation}
We refer to the approximation~\eqref{eq:Muhammad-A} as formula (DE1).
This replacement  has been shown to improve the convergence rate
from $\OO(\exp(-c\sqrt{n}))$
to $\OO(\exp(-c n/\log n))$ in many fields~\cite{Mori,MoriSugi}.
In fact, it was shown~\cite{OkayamaMM} that
the formula (DE1) can attain a convergence rate of $\OO(\exp(-c n/\log n))$.
Such a rapid convergence motivated several authors
to apply the formula to various applications~\cite{MuhammadN,Nurmuhammad,Okayama2}.

Almost simultaneously, as an improvement of (SE2),
Tanaka et al.~\cite{Tanaka} proposed the following formula
\begin{equation}
\int_{-1}^x f(s)\dd s
\approx \mathcal{I}_{\DE{}2}(f,x):=
 h \sum_{i=-n}^n \left\{ \sum_{j=-n}^n
\left( f(\phi(jh))
-
\frac{1}{2}
 I^{\ast}_{\phi}
\right)\phi'(jh)
\delta^{(-1)}_{ij}\right\}
\sinc\left(\frac{\phi^{-1}(x) - ih}{h}\right)
+ I^{\ast}_{\phi} \eta(x),
\label{eq:Tanaka-B}
\end{equation}
where
\[
 I^{\ast}_{\phi} = h \sum_{k=-n}^n f(\phi(kh))\phi'(kh).
\]
We refer to the approximation~\eqref{eq:Tanaka-B} as formula (DE2).
They also analyzed the convergence rate as
$\OO(\exp(-c n/\log n))$, which is the same as that of (DE1).


In addition to the formulas above,
Stenger~\cite{Stenger}
derived yet another indefinite integration formula, expressed as
\begin{equation}
\int_{-1}^x f(s)\dd s
\approx
\sum_{i=-M}^N
\left(h \sum_{j=-M}^N \delta^{(-1)}_{ij}f(\psi(jh))\psi'(jh)\right)
\omega^{\SE}_i(x),
\label{eq:Stenger-C}
\end{equation}
where
\[
 \left\{
\begin{array}{rl}
 \omega^{\SE}_{-M}(x)=&\!\!\!\!\displaystyle
\frac{1}{1 - \eta(\psi(-Mh))}
\left\{
(1 - \eta(x))
 - \sum_{k=-M+1}^N
(1 - \eta(\psi(kh)))
\sinc\left(\frac{\psi^{-1}(x) - kh}{h}\right)
\right\},\\
 \omega^{\SE}_i(x) =&\!\!\!\!\displaystyle\sinc\left(\frac{\psi^{-1}(x) - ih}{h}\right) \quad\quad
(i=-M+1,\,\ldots,\,N-1),\\
\omega^{\SE}_N(x)=&\!\!\!\!\displaystyle
\frac{1}{\eta(\psi(Nh))}
\left\{
\eta(x)
 - \sum_{k=-M}^{N-1}
\eta(\psi(kh))
\sinc\left(\frac{\psi^{-1}(x) - kh}{h}\right)
\right\}.
\end{array}
\right.
\]
We refer to the approximation~\eqref{eq:Stenger-C} as formula (SE3).
This formula not only extends the sum from $\sum_{j=-n}^n$ to
$\sum_{j=-M}^N$, but also has an elegant matrix-vector form.
Let $m = M+N+1$,
and let $I^{(-1)}_m$ be a square matrix of order $m$ having
$\delta^{(-1)}_{ij}$ as its $(i, j)$th element.
Moreover, let
\begin{align*}
 D^{\SE}_m
 &= \diag[\psi'(-Mh),\,\psi'(-(M-1)h),\,\ldots,\,\psi'((N-1)h),\,\psi'(Nh)],\\
\boldsymbol{f}^{\SE}_m
 &= [f(\psi(-Mh)),\,f(\psi(-(M-1)h)),\,\ldots,\,f(\psi((N-1)h)),\,f(\psi(Nh))]^{\mathrm{T}},\\
\boldsymbol{\omega}^{\SE}_m(x)
 &=
 [\omega^{\SE}_{-M}(x),\,\omega^{\SE}_{-M+1}(x),\,\ldots,\,\omega^{\SE}_{N-1}(x),\,\omega^{\SE}_{N}(x)],
\end{align*}
and let $A^{\SE}_m = h I^{(-1)}_m D^{\SE}_m$.
Then, the approximation~\eqref{eq:Stenger-C} can be written as
\[
\int_{-1}^x f(s)\dd s
\approx  \boldsymbol{\omega}^{\SE}_m(x) A^{\SE}_m \boldsymbol{f}^{\SE}_m .
\]
Its convergence rate was analyzed as $\OO(\exp(-c\sqrt{n}))$,
where $n = \max\{M, N\}$.
Of note,
this notation may be used for an integral of any order; for example,
\begin{align*}
 f(x) &\approx \boldsymbol{\omega}^{\SE}_m(x) \boldsymbol{f}^{\SE}_m ,\\
\int_{-1}^x f(s)\dd s
&\approx  \boldsymbol{\omega}^{\SE}_m(x) A^{\SE}_m \boldsymbol{f}^{\SE}_m ,\\
\int_{-1}^x \left(\int_{-1}^s f(t) \dd t\right)\dd s
 &\approx \boldsymbol{\omega}^{\SE}_m(x) \left(A^{\SE}_m\right)^2 \boldsymbol{f}^{\SE}_m ,\\
\int_{-1}^x
 \left\{
 \int_{-1}^s
 \left(
   \int_{-1}^t f(u)\dd u
 \right)\dd t
\right\}
\dd s
 &\approx \boldsymbol{\omega}^{\SE}_m(x) \left(A^{\SE}_m\right)^3 \boldsymbol{f}^{\SE}_m ,
\end{align*}
and so forth.
As an application,
this beautiful feature has been used
to derive an approximation formula for indefinite convolution~\cite{StengerC}.

The main contribution of this paper is an improvement of the formula (SE3).
Based on the same idea as (DE1) and (DE2),
we propose the replacement of the tanh transformation
used in (SE3) with the DE transformation.
Namely, using the notation
\[
 \left\{
\begin{array}{rl}
 \omega^{\DE}_{-M}(x)=&\!\!\!\!\displaystyle
\frac{1}{1 - \eta(\phi(-Mh))}
\left\{
(1 - \eta(x))
 - \sum_{k=-M+1}^N
(1 - \eta(\phi(kh)))
\sinc\left(\frac{\phi^{-1}(x) - kh}{h}\right)
\right\},\\
 \omega^{\DE}_j(x) =&\!\!\!\!\displaystyle\sinc\left(\frac{\phi^{-1}(x) - jh}{h}\right) \quad\quad
(j=-M+1,\,\ldots,\,N-1),\\
\omega^{\DE}_N(x)=&\!\!\!\!\displaystyle
\frac{1}{\eta(\phi(Nh))}
\left\{
\eta(x)
 - \sum_{k=-M}^{N-1}
\eta(\phi(kh))
\sinc\left(\frac{\phi^{-1}(x) - kh}{h}\right)
\right\},
\end{array}
\right.
\]
we propose the following formula:
\begin{equation}
\int_{-1}^x f(s)\dd s
\approx \boldsymbol{\omega}^{\DE}_m(x) A^{\DE}_m \boldsymbol{f}^{\DE}_m,
\label{eq:Okayama-C}
\end{equation}
where
\begin{align*}
 D^{\DE}_m
 &= \diag[\phi'(-Mh),\,\phi'(-(M-1)h),\,\ldots,\,\phi'((N-1)h),\,\phi'(Nh)],\\
\boldsymbol{f}^{\DE}_m
 &= [f(\phi(-Mh)),\,f(\phi(-(M-1)h)),\,\ldots,\,f(\phi((N-1)h)),\,f(\phi(Nh))]^{\mathrm{T}},\\
\boldsymbol{\omega}^{\DE}_m(x)
 &=
 [\omega^{\DE}_{-M}(x),\,\omega^{\DE}_{-M+1}(x),\,\ldots,\,\omega^{\DE}_{N-1}(x),\,\omega^{\DE}_{N}(x)],
\end{align*}
and $A^{\DE}_m = h I^{(-1)}_m D^{\DE}_m$.
We refer to the approximation~\eqref{eq:Okayama-C} as formula (DE3).
Furthermore, we show theoretically that the formula (DE3) can attain
the same convergence rate as that of (DE1) and (DE2),
i.e., $\OO(\exp(-c n /\log n))$.

As a subsidiary contribution,
we compared the six formulas
(SE1), (SE2), (SE3),
(DE1), (DE2) and (DE3).
Akinola~\cite{Akinola} performed a numerical comparison
between (SE1), (SE2)\footnote{In fact, he also considered
Haber's second formula, which gave results similar to those of the formula (SE2).}
and (DE2),
and reported that the formula (SE1) exhibited the lowest CPU time among
those formulas.
However, he compared CPU times with respect to $n$,
which is not suitable for a comparison in terms of efficiency.
In this work, we compared CPU times with respect to accuracy.

The remainder of this paper is organized as follows.
In Section~\ref{sec:theo_result},
we present existing and new theoretical results.
In Section~\ref{sec:numer_result},
we show numerical results.
In Section~\ref{sec:proofs},
we provide the proof of a new theorem stated in
Section~\ref{sec:theo_result}.
Section~\ref{sec:conclusion} concludes the work.
Appendix~\ref{sec:rigorous-proof} gives a rigorous error bound used in Section~\ref{sec:proofs}.

\section{Existing and new theorems}
\label{sec:theo_result}

In this section, we present convergence theorems
for the formulas (SE1), (SE2), (SE3), (DE1), (DE2), and (DE3).
In those theorems,
the integrand translated by the tanh transformation
or the DE transformation
($f(\psi(\cdot))$ or $f(\phi(\cdot))$)
is assumed to be analytic on a strip complex domain
$\domD_d = \{\zeta\in\CC : |\Im\zeta| < d\}$
for a positive constant $d$.
That is, $f$ should be analytic on the domain
$\psi(\domD_d)=\{z = \psi(\zeta) : \zeta\in\domD_d\}$
or on
$\phi(\domD_d)=\{z = \phi(\zeta) : \zeta\in\domD_d\}$.

\subsection{Convergence theorems for (SE1), (SE2), and (SE3)}

A convergence theorem of the formula (SE1) was given as follows.
In the theorem,
the sum is extended to $\sum_{j=-M}^N$
instead of $\sum_{j=-n}^n$ in the original formula~\eqref{eq:Haber-A}.

\begin{theorem}[Okayama et al.~{\cite[Theorem~2.9]{OkayamaMM}}]
\label{thm:SE1}
Assume that $f$ is analytic in $\psi(\domD_d)$ for $d$ with $0<d<\pi$,
and that there exist positive constants $K$, $\alpha$ and $\beta$
such that
\begin{equation}
 |f(z)|\leq K |z + 1|^{\alpha-1} |z - 1|^{\beta-1}
\label{eq:f-bound-alpha-beta}
\end{equation}
holds for all $z\in\psi(\domD_d)$.
Let $\mu=\min\{\alpha,\beta\}$,
let $n$ be a positive integer,
and let $h$ be selected by
\begin{equation}
 h = \sqrt{\frac{\pi d}{\mu n}}.
\label{eq:h-SE}
\end{equation}
Furthermore, let $M$ and $N$ be positive integers defined by
\begin{equation}
 \begin{cases}
  M = n,\quad N = \left\lceil\dfrac{\alpha}{\beta}n\right\rceil
& (\text{if}\,\,\, \mu = \alpha)  \vspace*{3pt}\\
  N = n,\quad M = \left\lceil\dfrac{\beta}{\alpha}n\right\rceil
& (\text{if}\,\,\, \mu = \beta)
 \end{cases}
\label{eq:M-N-SE}
\end{equation}
respectively.
Then, there exists a constant $C$ independent of $n$ such that
\[
 \sup_{x\in (-1,1)}
\left|
\int_{-1}^x f(s)\dd s
- \sum_{j=-M}^N f(\psi(jh))\psi'(jh) J(j,h)(\psi^{-1}(t))
\right|
\leq C \exp\left(-\sqrt{\pi d \mu n}\right).
\]
\end{theorem}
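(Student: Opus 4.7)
The idea is to reduce everything to the standard Sinc indefinite integration error on $\RR$. Set $F(u)=f(\psi(u))\psi'(u)$ and $t=\psi^{-1}(x)$; then the change of variables $s=\psi(u)$ rewrites the quantity to bound as
\[
\sup_{t\in\RR}\left|\int_{-\infty}^{t} F(u)\dd u - \sum_{j=-M}^{N} F(jh)\,J(j,h)(t)\right|,
\]
which is the classical truncated Sinc indefinite integration error for $F$. From here I would proceed in three steps: (i) show that $F$ lies in the appropriate Stenger function class, i.e.\ is analytic on $\domD_d$ and exhibits single-exponential decay $F(u)=\OO(\ee^{\alpha u})$ as $u\to-\infty$ and $F(u)=\OO(\ee^{-\beta u})$ as $u\to+\infty$, together with an analogous estimate on the boundary of $\domD_d$; (ii) invoke a known Sinc indefinite integration error bound that decomposes the error into one discretization term plus two truncation terms; (iii) choose $h$, $M$, $N$ as in~\eqref{eq:h-SE} and~\eqref{eq:M-N-SE} so as to balance these three contributions.

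For step~(i), analyticity of $F$ on $\domD_d$ follows from the hypothesis $d<\pi$, since $\psi(\zeta)=\tanh(\zeta/2)$ is analytic for $|\Im\zeta|<\pi$ and $f$ is analytic on $\psi(\domD_d)$. Combining the identities $1\pm\psi(\zeta)=2/(1+\ee^{\mp\zeta})$ and $\psi'(\zeta)=1/(2\cosh^2(\zeta/2))$ with the hypothesis~\eqref{eq:f-bound-alpha-beta}, a short calculation produces a uniform bound of the shape
\[
|F(\zeta)|\le \frac{\tilde K}{|1+\ee^{-\zeta}|^{\alpha}\,|1+\ee^{\zeta}|^{\beta}} \qquad (\zeta\in\domD_d),
\]
from which the asserted decay on the real line and integrability on the sides $\Im\zeta=\pm d$ follow at once. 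For step~(ii), a standard Sinc lemma yields a bound of the form
\[
\text{error} \le C_1\,\frac{\ee^{-\pi d/h}}{1-\ee^{-2\pi d/h}} + \frac{C_2}{\alpha}\,\ee^{-\alpha M h} + \frac{C_3}{\beta}\,\ee^{-\beta N h}.
\]
For step~(iii), the choice~\eqref{eq:M-N-SE} guarantees $\alpha M h\ge \mu n h$ and $\beta N h\ge \mu n h$, so both truncation terms are $\OO(\ee^{-\mu n h})$; equating the exponents $\pi d/h=\mu n h$ yields $h=\sqrt{\pi d/(\mu n)}$, for which every contribution collapses to $\OO(\ee^{-\sqrt{\pi d\mu n}})$.

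The main obstacle is step~(i): the hypothesis~\eqref{eq:f-bound-alpha-beta} controls $f$ only pointwise on $\psi(\domD_d)$, and propagating it to uniform bounds along the horizontal lines $\Im\zeta=\pm d$ requires careful lower estimates of $|1+\ee^{\pm\zeta}|$, both far from and close to the singular directions of $\psi$ at $\Im\zeta\to\pm\pi$. Once these uniform estimates are in hand, step~(ii) amounts to citing a classical Sinc result and step~(iii) is a one-line optimization in $h$.
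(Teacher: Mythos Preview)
The paper does not supply a proof of this theorem; Theorem~\ref{thm:SE1} is simply quoted from~\cite[Theorem~2.9]{OkayamaMM} as background, so there is no argument in the present paper to compare your sketch against.

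Your three-step plan is nonetheless the standard route and coincides with how the result is obtained in the source reference: reduce to the Sinc indefinite integration on $\RR$ for $F=(f\circ\psi)\,\psi'$, verify $F\in B(\domD_d)$ with the single-exponential bound you wrote, split the error into a discretization piece (cf.\ Theorem~\ref{thm:sinc_int_on_R_disc_err} in this paper, which contributes an extra harmless factor of $h$ that you omitted) and two truncation tails controlled by the uniform boundedness of $J(j,h)$ together with the geometric decay of $F(jh)$, and finally balance $\pi d/h=\mu n h$. The ``main obstacle'' you flag in step~(i) is milder than you suggest: since $d<\pi$ is strict, on horizontal lines $\Im\zeta=c$ with $0<|c|<d$ one has $|1+\ee^{\zeta}|^2=1+2\ee^{\Re\zeta}\cos c+\ee^{2\Re\zeta}\ge\sin^2 c>0$, and together with the obvious asymptotics $|1+\ee^{\pm\zeta}|\sim\ee^{\pm\Re\zeta}$ as $\Re\zeta\to\pm\infty$ this yields the integrability along $\partial\domD_d$ directly.
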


A convergence theorem of the formula (SE2) was given as follows.

\begin{theorem}[Stenger~{\cite[Corollary~4.5.3]{Stenger}}]
\label{thm:SE2}
Assume that $f$ is analytic in $\psi(\domD_d)$ for $d$ with $0<d<\pi$,
and that there exist positive constants $K$ and $\mu$ with $\mu\leq 1$
such that
\begin{equation}
 |f(z)|\leq K |z + 1|^{\mu-1} |z - 1|^{\mu-1}
\label{eq:f-bound-mu-mu}
\end{equation}
holds for all $z\in\psi(\domD_d)$.
Let $n$ be a positive integer,
and let $h$ be selected by~\eqref{eq:h-SE}.
Then, there exists a constant $C$ independent of $n$ such that
\[
 \sup_{x\in (-1,1)}
\left|
\int_{-1}^x f(s)\dd s
- \mathcal{I}_{\SE{}2}(f,x)
\right|
\leq C \sqrt{n} \exp\left(-\sqrt{\pi d \mu n}\right).
\]
\end{theorem}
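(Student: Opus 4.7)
The plan is to pass to the real line via the tanh transformation and to recast formula~\eqref{eq:Stenger-B} as a composition of Sinc indefinite integration at the nodes $ih$ followed by Sinc interpolation over $t\in\RR$. Set $t=\psi^{-1}(x)$, $F(u)=f(\psi(u))\psi'(u)$, and $G(t)=\int_{-\infty}^{t}F(u)\dd u$, so that the quantity to be approximated is $G(t)$. Introducing $\tilde F(u)=F(u)-\tfrac{1}{2}I^{\ast}_{\psi}\psi'(u)$ and $\tilde G(t)=\int_{-\infty}^{t}\tilde F(u)\dd u=G(t)-I^{\ast}_{\psi}\eta(\psi(t))$, a direct computation rewrites the formula as
\[
\mathcal{I}_{\SE{}2}(f,x) = I^{\ast}_{\psi}\,\eta(\psi(t)) + \sum_{i=-n}^{n}\tilde a_i\sinc\!\left(\frac{t-ih}{h}\right), \qquad \tilde a_i := h\sum_{j=-n}^{n}\tilde F(jh)\,\delta^{(-1)}_{ij},
\]
so that the error reduces to $E(x)=\tilde G(t)-\sum_i \tilde a_i\sinc((t-ih)/h)$.

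Adding and subtracting the exact Sinc interpolant of $\tilde G$ yields the natural decomposition $E=E_{\mathrm{I}}+E_{\mathrm{II}}$, where
\[
E_{\mathrm{I}} = \tilde G(t) - \sum_{i=-n}^{n}\tilde G(ih)\sinc\!\left(\frac{t-ih}{h}\right), \qquad E_{\mathrm{II}} = \sum_{i=-n}^{n}\bigl(\tilde G(ih)-\tilde a_i\bigr)\sinc\!\left(\frac{t-ih}{h}\right).
\]
For $E_{\mathrm{I}}$ I would invoke a standard Sinc interpolation error bound. The hypothesis~\eqref{eq:f-bound-mu-mu}, combined with the fact that $|1\pm\psi(u+\ii v)|$ and $|\psi'(u+\ii v)|$ behave like $\ee^{-|\Re u|}$ on $\domD_d$, shows that $F$, and hence $\tilde F$, is analytic on $\domD_d$ with $|\tilde F(u+\ii v)|\lesssim \ee^{-\mu|\Re u|}$. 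Integrating gives analyticity of $\tilde G$ on $\domD_d$ and the bound $|\tilde G(t)|\lesssim \ee^{-\mu|t|}$ as $t\to-\infty$; as $t\to+\infty$, $\tilde G(t)\to I-I^{\ast}_{\psi}$, which is exponentially small by the classical trapezoidal-rule estimate for the integral of $F$. Subtracting the residual $(I-I^{\ast}_{\psi})\eta(\psi(t))$ from $\tilde G$ produces a function that decays at both infinities, to which the textbook Sinc interpolation bound applies, giving $|E_{\mathrm{I}}|\leq C_1\ee^{-\sqrt{\pi d\mu n}}$ with $h=\sqrt{\pi d/(\mu n)}$.

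For $E_{\mathrm{II}}$, each difference $\tilde G(ih)-\tilde a_i$ is the Sinc indefinite integration error applied to $\tilde F$ at the node $ih$, and the classical bound provides $\max_i|\tilde G(ih)-\tilde a_i|\leq C_2\,\ee^{-\sqrt{\pi d\mu n}}$ uniformly in $i$. To propagate this pointwise bound to the whole sum, I would apply Cauchy--Schwarz together with Parseval's identity: $\sum_{i=-n}^{n}|\sinc((t-ih)/h)|\leq \sqrt{2n+1}\bigl(\sum_i \sinc^2((t-ih)/h)\bigr)^{1/2}\leq \sqrt{2n+1}$, which introduces the extra $\sqrt{n}$ factor. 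Combining these two estimates yields the stated bound $\OO(\sqrt{n}\,\ee^{-\sqrt{\pi d\mu n}})$. The main obstacle is the rigorous verification that $\tilde F$, and hence $\tilde G$, belongs to the appropriate Sinc function class on $\domD_d$ with the required analyticity and double-sided exponential decay, so that the two textbook Sinc estimates apply with exponent $\sqrt{\pi d\mu n}$; once these structural facts are in place, the remaining bookkeeping is routine.
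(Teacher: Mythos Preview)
The paper does not give its own proof of this statement; Theorem~\ref{thm:SE2} is simply quoted from Stenger~\cite[Corollary~4.5.3]{Stenger}. Your outline is essentially Stenger's original argument: subtract $I^{\ast}_{\psi}\eta(\psi(\cdot))$ so that the resulting primitive $\tilde G$ decays at both ends, then view the formula as Sinc indefinite integration at the nodes followed by Sinc interpolation. The decomposition $E=E_{\mathrm I}+E_{\mathrm{II}}$ and the two classical estimates you invoke are the right ingredients, and the analytic-decay verification for $F$, $\tilde F$, $\tilde G$ on $\domD_d$ is routine under hypothesis~\eqref{eq:f-bound-mu-mu}.

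Two small points deserve tightening. First, when you pass from $\tilde G$ to $\hat G=\tilde G-(I-I^{\ast}_{\psi})\eta(\psi(\cdot))$ to obtain two-sided decay, the quantity $E_{\mathrm I}$ you defined is the interpolation error of $\tilde G$, not of $\hat G$; the difference is $(I-I^{\ast}_{\psi})\bigl[\eta(\psi(t))-\sum_i\eta(\psi(ih))\sinc((t-ih)/h)\bigr]$, which is harmless since $|I-I^{\ast}_{\psi}|\lesssim\ee^{-\sqrt{\pi d\mu n}}$ and the bracket is $\OO(\log n)$ by the Lebesgue constant of the sinc system. Second, for $E_{\mathrm{II}}$ you use Cauchy--Schwarz plus $\sum_i\sinc^2\le 1$ to extract $\sqrt{2n+1}$; this is correct and matches the stated $\sqrt{n}$ factor, but the sharper bound $\sum_{i=-n}^n|\sinc((t-ih)/h)|\le\frac{2}{\pi}(3+\log n)$ (Stenger~\cite[p.~142]{Stenger}) would give $\OO(\log n)$ instead.

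For comparison, the only proof the paper actually carries out is for the structurally different formula~(DE3) in Section~\ref{sec:proofs}. There the split is into three terms---generalized Sinc approximation of $g$ using the boundary-treated basis $\omega^{\DE}_i$, discretization error of the Sinc indefinite integration, and its truncation error---and the Lebesgue-constant bound $\sum_i|\omega^{\DE}_i(x)|\le C\log(n+1)$ replaces your Cauchy--Schwarz step. Your two-term split is the natural analogue for~(SE2) because the plain sinc basis together with the affine correction $\eta$ plays the role that the $\omega_i$'s play in~(SE3)/(DE3).
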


A convergence theorem of the formula (SE3) was given as follows.

\begin{theorem}[Stenger~{\cite[Theorem~4.9]{StengerC}}]
\label{thm:SE3}
Assume that $f$ is analytic in $\psi(\domD_d)$ for $d$ with $0<d<\pi$,
and that there exist positive constants $K$, $\alpha$ with $\alpha\leq 1$
and $\beta$ with $\beta\leq 1$
such that~\eqref{eq:f-bound-alpha-beta}
holds for all $z\in\psi(\domD_d)$.
Let $\mu=\min\{\alpha,\beta\}$,
let $n$ be a positive integer,
and let $h$ be selected by~\eqref{eq:h-SE}.
Furthermore, let $M$ and $N$ be positive integers defined
by~\eqref{eq:M-N-SE}.
Then, there exists a constant $C$ independent of $n$ such that
\[
 \sup_{x\in (-1,1)}
\left|
\int_{-1}^x f(s)\dd s
- \boldsymbol{\omega}^{\SE}_m(x) A^{\SE}_m \boldsymbol{f}^{\SE}_m
\right|
\leq C \sqrt{n} \exp\left(-\sqrt{\pi d \mu n}\right).
\]
\end{theorem}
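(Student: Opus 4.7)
The plan is to split the error of (SE3) into an interpolation component and a propagated (SE1)-quadrature component, which is the standard strategy for Stenger-type matrix formulas. Writing $F(x):=\int_{-1}^x f(s)\dd s$, the crucial observation is that the inner sum
\[
\tilde F_i := h\sum_{j=-M}^N \delta^{(-1)}_{ij}\, f(\psi(jh))\,\psi'(jh)
\]
coincides with the (SE1) approximation of $F(\psi(ih))$, because $J(j,h)(ih)=h(\tfrac{1}{2}+\sigma_{i-j})=h\delta^{(-1)}_{ij}$. Hence the (SE3) approximant has the structure $\sum_{i=-M}^N \tilde F_i\,\omega^{\SE}_i(x)$, where the coefficients are quadrature approximations of $F(\psi(ih))$ and $\{\omega^{\SE}_i\}$ is the modified SE-Sinc interpolation basis at the nodes $\{\psi(ih)\}$. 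Inserting the exact-data interpolant gives
\[
F(x)-\sum_{i=-M}^N \tilde F_i\,\omega^{\SE}_i(x)
=\underbrace{\Bigl(F(x)-\sum_i F(\psi(ih))\,\omega^{\SE}_i(x)\Bigr)}_{E_{\mathrm{int}}(x)}
+\underbrace{\sum_i\bigl(F(\psi(ih))-\tilde F_i\bigr)\,\omega^{\SE}_i(x)}_{E_{\mathrm{prop}}(x)}.
\]

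To bound $E_{\mathrm{int}}$, I would invoke a classical Stenger SE-Sinc interpolation estimate for functions with prescribed endpoint behaviour. Set $G(x):=F(x)-F(1)\eta(x)$, so that $G(\pm 1)=0$; by~\eqref{eq:f-bound-alpha-beta} together with $\alpha,\beta\le 1$, integrating $f$ shows $G(x)=\OO(|x+1|^\alpha)$ near $x=-1$ and $G(x)=\OO(|1-x|^\beta)$ near $x=1$, while $G$ inherits analyticity on $\psi(\domD_d)$ from $f$. Using the explicit formulas for $\omega^{\SE}_{-M}$ and $\omega^{\SE}_N$, the $F(1)\eta$ piece of $\sum_i F(\psi(ih))\omega^{\SE}_i(x)$ is absorbed by the two boundary basis functions, and the residual reduces to a standard SE-Sinc approximation of $G$. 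Applying the classical SE-Sinc interpolation error bound with mesh~\eqref{eq:h-SE} and truncation~\eqref{eq:M-N-SE} then yields $\sup_{x\in(-1,1)}|E_{\mathrm{int}}(x)|\le C_1\sqrt{n}\exp(-\sqrt{\pi d\mu n})$, the $\sqrt{n}$ factor being the usual penalty from summing the $\OO(n)$ truncated Sinc contributions.

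For $E_{\mathrm{prop}}$, each coefficient error $|F(\psi(ih))-\tilde F_i|$ is the (SE1) error evaluated at $\psi(ih)\in(-1,1)$, so Theorem~\ref{thm:SE1} applies (its hypotheses are implied here since $\alpha,\beta\le 1$) and gives $\max_i|F(\psi(ih))-\tilde F_i|\le C_2\exp(-\sqrt{\pi d\mu n})$. Factoring this bound out, $|E_{\mathrm{prop}}(x)|\le C_2\exp(-\sqrt{\pi d\mu n})\,\Lambda_m(x)$, where $\Lambda_m(x):=\sum_{i=-M}^N|\omega^{\SE}_i(x)|$ is the Lebesgue function of the modified basis. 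The interior Sinc terms contribute the classical $\OO(\log n)$ Lebesgue function, while the two boundary terms $\omega^{\SE}_{-M},\omega^{\SE}_N$ are controlled by noting that the normalizing factors $1/(1-\eta(\psi(-Mh)))$ and $1/\eta(\psi(Nh))$ remain $\OO(1)$ and that the bracketed residuals of the partial Sinc interpolations of the bounded analytic functions $1-\eta$ and $\eta$ are uniformly bounded on $(-1,1)$. Thus $\Lambda_m(x)=\OO(\log n)$, which is absorbed into the $\sqrt{n}$ factor of $E_{\mathrm{int}}$.

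Adding the two estimates produces the claimed bound $C\sqrt{n}\exp(-\sqrt{\pi d\mu n})$. The main obstacle is the estimate for $E_{\mathrm{int}}$: although the reduction to a standard SE-Sinc approximation of a function vanishing at both endpoints is conceptually clean, it requires careful verification that (i) the decomposition $F=F(1)\eta+G$ interacts cleanly with the nonstandard basis $\{\omega^{\SE}_i\}$ — specifically that the $F(1)\eta$ piece is reproduced exactly once the two boundary basis functions are taken into account — and (ii) the effective decay exponents of $G$ at $\pm 1$ are genuinely $\alpha$ and $\beta$, which is exactly where the assumption $\alpha,\beta\le 1$ is essential (larger exponents would be cut off at $1$ by the $\eta$-subtraction). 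Once these boundary-bookkeeping issues are settled, the Lebesgue-function bound for $E_{\mathrm{prop}}$ is fairly mechanical.
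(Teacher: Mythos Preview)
The paper does not itself prove Theorem~\ref{thm:SE3}; it is quoted from Stenger~\cite{StengerC}. The natural comparison is with the paper's proof of the DE analog, Theorem~\ref{thm:DE3}, in Section~\ref{sec:proofs}. Your strategy matches that proof's structure: recognise the (SE3) approximant as $\sum_i \tilde g_i\,\omega^{\SE}_i(x)$ with $\tilde g_i$ the (SE1) value at the node $\psi(ih)$, then split into an interpolation error for $g(x)=\int_{-1}^x f$ in the $\omega^{\SE}$-basis and a propagated node-error weighted by $|\omega^{\SE}_i|$. The paper goes one step further and splits your $E_{\mathrm{prop}}$ into a discretization piece (infinite Sinc sum) and a truncation piece, bounding each from scratch; your shortcut of invoking Theorem~\ref{thm:SE1} directly for $\max_i|g(\psi(ih))-\tilde g_i|$ is legitimate and slightly more economical. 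The Lebesgue bound $\sum_i|\omega^{\SE}_i(x)|=\OO(\log n)$ is precisely the SE counterpart of Lemma~\ref{lem:bound-omega-sum}.

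Where your sketch diverges is the treatment of $E_{\mathrm{int}}$. The paper does not attempt a $g=g(1)\eta+G$ reduction; it shows directly that $g\in\mathbf{M}_{\alpha,\beta}$ on the transformed domain and then applies a ready-made error bound for the $\omega$-basis interpolation on that class (Okayama~\cite{Okayama1} for DE; the SE version is in Stenger~\cite{Stenger}). Your reduction has a snag you partly anticipate: the modified basis does \emph{not} reproduce $\eta$ exactly. A direct computation gives
\[
\sum_{i=-M}^N \eta(\psi(ih))\,\omega^{\SE}_i(x)=\eta(x)+\eta(\psi(-Mh))\Bigl[\omega^{\SE}_{-M}(x)-\sinc\bigl(\tfrac{\psi^{-1}(x)+Mh}{h}\bigr)\Bigr],
\]
so an $\OO(\ee^{-Mh})$ residual survives. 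This is harmless for the final estimate, but it means the promised reduction to ``a standard SE-Sinc approximation of $G$'' does not literally occur: after peeling off the $\eta$-piece you are still interpolating $G$ with the non-sinc boundary functions $\omega^{\SE}_{-M},\omega^{\SE}_N$, plus carrying the residual above. The cleanest repair is to drop the reduction and, as the paper does, verify $g\in\mathbf{M}_{\alpha,\beta}(\psi(\domD_d))$ and cite the $\omega$-basis interpolation theorem directly.
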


\subsection{Convergence theorems for (DE1), (DE2), and (DE3)}

A convergence theorem of the formula (DE1) was given as follows.
In the theorem,
the sum is extended to $\sum_{j=-M}^N$
instead of $\sum_{j=-n}^n$ in the original formula~\eqref{eq:Muhammad-A}.

\begin{theorem}[Okayama et al.~{\cite[Theorem~2.16]{OkayamaMM}}]
\label{thm:DE1}
Assume that $f$ is analytic in $\phi(\domD_d)$ for $d$ with $0<d<\pi/2$,
and that there exist positive constants $K$, $\alpha$ and $\beta$
such that~\eqref{eq:f-bound-alpha-beta}
holds for all $z\in\phi(\domD_d)$.
Let $\mu=\min\{\alpha,\beta\}$,
let $n$ be a positive integer,
and let $h$ be selected by
\begin{equation}
 h = \frac{\log(2 d n/\mu)}{n}.
\label{eq:h-DE}
\end{equation}
Furthermore, let $M$ and $N$ be positive integers defined by
\begin{equation}
 \begin{cases}
  M = n,\quad N = n - \left\lfloor\dfrac{1}{h}\log\left(\dfrac{\beta}{\alpha}\right)\right\rfloor
& (\text{if}\,\,\, \mu = \alpha) \vspace*{3pt}\\
  N = n,\quad M = n - \left\lfloor\dfrac{1}{h}\log\left(\dfrac{\alpha}{\beta}\right)\right\rfloor
& (\text{if}\,\,\, \mu = \beta)
 \end{cases}
\label{eq:M-N-DE}
\end{equation}
respectively.
Then, there exists a constant $C$ independent of $n$ such that
\[
 \sup_{x\in (-1,1)}
\left|
\int_{-1}^x f(s)\dd s
- \sum_{j=-M}^N f(\phi(jh))\phi'(jh) J(j,h)(\phi^{-1}(t))
\right|
\leq C \frac{\log(2 d n/\mu)}{n}
 \exp\left(-\frac{\pi d n}{\log(2 d n/\mu)}\right).
\]
\end{theorem}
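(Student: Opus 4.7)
The plan is to reduce the problem to the Sinc indefinite integration on $\RR$ via the substitution $u=\phi^{-1}(s)$. Setting $F(u):=f(\phi(u))\phi'(u)$ and $t:=\phi^{-1}(x)$, the target becomes
\[
 \int_{-\infty}^{t} F(u)\dd u \approx \sum_{j=-M}^{N} F(jh)\, J(j,h)(t),
\]
so I would split the error into a \emph{discretization} part (comparing the integral to the full bi-infinite Sinc series $\sum_{j\in\mathbb{Z}}$) and a \emph{truncation} part (the tails $j<-M$ and $j>N$).

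For the discretization error, I would invoke the standard Hardy-space error estimate for Sinc indefinite integration: if $F$ is analytic in $\domD_d$ with integrable boundary traces and sufficient decay on $\RR$, then
\[
 \sup_{t\in\RR}\left|\int_{-\infty}^{t}F(u)\dd u - \sum_{j\in\mathbb{Z}} F(jh) J(j,h)(t)\right|\leq \frac{C_1\,\exp(-\pi d/h)}{h}.
\]
To apply this I would verify its hypotheses on $F$: analyticity in $\domD_d$ follows from that of $f$ in $\phi(\domD_d)$, and the doubly-exponential decay $|1\pm\phi(\zeta)|\asymp \exp(-(\pi/2)\exp(|\Re\zeta|)\cos(\Im\zeta))$ on each closed substrip $|\Im\zeta|\leq d'<d<\pi/2$, combined with the assumption~\eqref{eq:f-bound-alpha-beta}, yields $|F(\zeta)|\lesssim \exp(-c\exp(|\Re\zeta|))$ and hence a finite Hardy norm bounded uniformly in $n$.

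For the truncation error I would use the uniform bound $|J(j,h)(t)|\leq h$ together with these decay estimates to obtain, for $j\geq 0$ large,
\[
 |F(jh)|\leq K\,\phi'(jh)\,(1-\phi(jh))^{\beta-1}\lesssim \exp\bigl(-c\beta\exp(jh)\bigr),
\]
with the symmetric bound involving $\alpha$ for $j\leq 0$, so that the truncation error is at most $C\bigl(\exp(-c\alpha\exp(Mh))+\exp(-c\beta\exp(Nh))\bigr)$. Balancing the three exponents by the choice~\eqref{eq:h-DE} gives $\pi d/h = \pi d n/\log(2dn/\mu)$, while the asymmetric truncation~\eqref{eq:M-N-DE} equalises both tail exponents with the discretization exponent; the surviving $1/h$ prefactor then produces the claimed $\OO\bigl((\log(2dn/\mu)/n)\exp(-\pi dn/\log(2dn/\mu))\bigr)$ rate.

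The main obstacle I anticipate is the sharp verification that $F$ lies in the requisite Hardy-type space $\mathbf{H}^1(\domD_d)$ with norm bounded uniformly in $n$: the restriction $d<\pi/2$ is essential here because it guarantees $\cos d>0$ in the doubly-exponential decay factor, so that the boundary integrals along $\Im\zeta=\pm d$ remain finite despite the possible endpoint singularities of $f$. A second subtle point is that the asymmetric truncation~\eqref{eq:M-N-DE} is not cosmetic: when $\alpha\neq\beta$, the shift $N = n-\lfloor h^{-1}\log(\beta/\alpha)\rfloor$ (in the case $\mu=\alpha$) is precisely what prevents the slower-decaying tail from dominating and ensures that $\mu=\min\{\alpha,\beta\}$ appears in the final exponent rather than a worse combination of $\alpha$ and $\beta$.
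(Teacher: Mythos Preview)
Your overall strategy---reduce to $\RR$ via $u=\phi^{-1}(s)$ and split into a discretization term and a truncation term---is the standard route and is exactly the skeleton behind the proof of the closely related Theorem~\ref{thm:DE3} in Section~\ref{sec:proofs}. (Note that the present paper does not prove Theorem~\ref{thm:DE1} itself; it is quoted from~\cite{OkayamaMM}.)

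There is, however, a genuine slip in your discretization bound. The correct estimate for $F\in B(\domD_d)$ (cf.\ Theorem~\ref{thm:sinc_int_on_R_disc_err} in the appendix, or \cite[Lemma~3.6.4]{Stenger}) reads
\[
\sup_{t\in\RR}\biggl|\int_{-\infty}^{t}F(u)\dd u-\sum_{j\in\mathbb{Z}}F(jh)J(j,h)(t)\biggr|
\;\leq\; C\,h\,\ee^{-\pi d/h},
\]
with a factor $h$ in front, not $1/h$ as you wrote. This is not cosmetic: with $h=\log(2dn/\mu)/n$ your version would yield a prefactor $1/h=n/\log(2dn/\mu)$, which is the reciprocal of the prefactor $\log(2dn/\mu)/n$ that actually appears in the theorem (and that you correctly quote at the end). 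So your argument as written is internally inconsistent. The factor $h$ is natural once you recall that $J(j,h)(t)=O(h)$ uniformly.

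Once this is corrected, the rest of your plan is sound. The truncation term is handled exactly as you describe: the double-exponential decay of $|F(jh)|$ on $\RR$ together with $|J(j,h)(t)|\leq c\,h$ gives a tail of order $\exp\bigl(-(\pi/2)\mu\,\ee^{nh}\bigr)=\exp(-\pi d n)$ after the asymmetric choice~\eqref{eq:M-N-DE} equalises the $\alpha$- and $\beta$-tails (this is precisely \cite[Lemma~4.20]{OkayamaMM}, also used in Section~\ref{sec:bound-third-term}). That contribution is dominated by the discretization term $h\,\ee^{-\pi d/h}$, which is the stated rate.
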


For the formula (DE2),
we can deduce the following convergence theorem
based on the existing result~\cite[Theorem~3.1~with $B=\pi/2$ and $C=1$]{Tanaka}.

\begin{theorem}
\label{thm:DE2}
Assume that $f$ is analytic in $\phi(\domD_d)$ for $d$ with $0<d<\pi/2$,
and that there exist positive constants $K$ and $\mu$ with $\mu\leq 1$
such that~\eqref{eq:f-bound-mu-mu}
holds for all $z\in\phi(\domD_d)$.
Let $n$ be a positive integer,
and let $h$ be selected by~\eqref{eq:h-DE}.
Then, there exists a constant $C$ independent of $n$ such that
\[
 \sup_{x\in (-1,1)}
\left|
\int_{-1}^x f(s)\dd s
- \mathcal{I}_{\DE{}2}(f,x)
\right|
\leq C \exp\left(-\frac{\pi d n}{\log(2 d n/\mu)}\right).
\]
\end{theorem}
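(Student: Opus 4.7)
The plan is to reduce Theorem~\ref{thm:DE2} to the existing general convergence result \cite[Theorem~3.1]{Tanaka}, which analyzes the error of the (DE2)-type approximation for a family of DE transformations of the form $\tanh(B\sinh u)$ under endpoint-decay hypotheses parametrized by a constant $C$. The present formula (DE2) uses the specific DE transformation with $B=\pi/2$, and the decay hypothesis~\eqref{eq:f-bound-mu-mu} with equal exponents at both endpoints corresponds exactly to the specialization $C=1$ of that framework. Thus the bulk of the work is verifying that our hypotheses line up and then substituting these parameter values into the general bound.

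First, I would verify that the analyticity and decay assumptions match. The analyticity on $\phi(\domD_d)$ for $0<d<\pi/2$ is exactly the requirement of \cite[Theorem~3.1]{Tanaka}, since $\phi$ maps $\domD_{\pi/2}$ conformally into a neighborhood of $(-1,1)$ avoiding the branch points of $\tanh$. The decay bound~\eqref{eq:f-bound-mu-mu} with $\mu\le 1$ is the $\alpha=\beta=\mu$, $C=1$ case of Tanaka's more general decay assumption, so the hypotheses of the general theorem hold automatically.

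Second, I would substitute $B=\pi/2$ and $C=1$ into the error estimate from \cite{Tanaka}. The balanced mesh size $h=\log(2dn/\mu)/n$ prescribed in~\eqref{eq:h-DE} is the standard DE rule equating the discretization and truncation contributions. Under this choice, the dominant exponential factor in Tanaka's general bound becomes $\exp(-2Bd\,n/\log(2dn/\mu)) = \exp(-\pi d n/\log(2dn/\mu))$, which is exactly the rate claimed. I would also track the algebraic prefactor to confirm that, for $C=1$, it collapses to a bounded constant (in contrast to the $\sqrt{n}$ factor in the SE analogue in Theorem~\ref{thm:SE2}, which is absorbed here by the doubly-exponential decay of $\phi'$ near the endpoints).

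The main obstacle is not conceptual but notational: it will be a careful bookkeeping exercise to confirm that Tanaka's parameters $(B,C)$ and the various auxiliary constants truly specialize to our setting, and in particular that no extra polynomial-in-$n$ factor survives the substitution $C=1$. Once that verification is in place, the theorem follows immediately from the general estimate, and no new analytic input beyond \cite[Theorem~3.1]{Tanaka} is required.
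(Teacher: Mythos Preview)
Your proposal is correct and matches the paper's own treatment: the paper does not give an independent proof of Theorem~\ref{thm:DE2} but simply states that it is deduced from \cite[Theorem~3.1 with $B=\pi/2$ and $C=1$]{Tanaka}, which is precisely the reduction you outline. Your plan to verify the parameter specialization and check that no polynomial-in-$n$ prefactor survives is exactly the bookkeeping the paper leaves implicit.
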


This paper shows the following convergence theorem
for the formula (DE3); we provide a proof in Section~\ref{sec:proofs}.

\begin{theorem}
\label{thm:DE3}
Assume that $f$ is analytic in $\phi(\domD_d)$ for $d$ with $0<d<\pi/2$,
and that there exist positive constants $K$, $\alpha$ with $\alpha\leq 1$
and $\beta$ with $\beta\leq 1$
such that~\eqref{eq:f-bound-alpha-beta}
holds for all $z\in\phi(\domD_d)$.
Let $\mu=\min\{\alpha,\beta\}$,
let $n$ be a positive integer,
and let $h$ be selected by~\eqref{eq:h-DE}.
Furthermore, let $M$ and $N$ be positive integers defined
by~\eqref{eq:M-N-DE}.
Then, there exists a constant $C$ independent of $n$ such that
\[
 \sup_{x\in (-1,1)}
\left|
\int_{-1}^x f(s)\dd s
- \boldsymbol{\omega}^{\DE}_m(x) A^{\DE}_m \boldsymbol{f}^{\DE}_m
\right|
\leq C \exp\left(-\frac{\pi d n}{\log(2 d n/\mu)}\right).
\]
\end{theorem}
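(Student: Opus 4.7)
The plan is to split the error into a modified DE-Sinc interpolation error for the antiderivative $g(x):=\int_{-1}^x f(s)\dd s$ plus a weighted sum of pointwise DE-Sinc indefinite-integration errors. Since $J(j,h)(ih)=h\bigl(\tfrac12+\sigma_{i-j}\bigr)=h\delta^{(-1)}_{ij}$ by direct evaluation of the sine integral at multiples of $\pi$, the $i$th component $(A^{\DE}_m\boldsymbol{f}^{\DE}_m)_i=h\sum_{j=-M}^N\delta^{(-1)}_{ij}f(\phi(jh))\phi'(jh)$ is exactly the formula (DE1) applied to $f$ and evaluated at $t=\phi(ih)$, so it approximates $g(\phi(ih))$. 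Adding and subtracting $\sum_i\omega^{\DE}_i(x)g(\phi(ih))$ yields
\[
\int_{-1}^x f(s)\dd s - \boldsymbol{\omega}^{\DE}_m(x) A^{\DE}_m \boldsymbol{f}^{\DE}_m
 = \Bigl\{g(x) - \sum_{i=-M}^N\omega^{\DE}_i(x)\,g(\phi(ih))\Bigr\} + \sum_{i=-M}^N\omega^{\DE}_i(x)\bigl\{g(\phi(ih)) - (A^{\DE}_m\boldsymbol{f}^{\DE}_m)_i\bigr\}.
\]

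For the first bracket I would first check that $g$ is analytic on $\phi(\domD_d)$, inherited from $f$, and, by integrating the bound~\eqref{eq:f-bound-alpha-beta} along suitable paths together with the hypothesis $\alpha,\beta\leq 1$, derive the endpoint estimates $|g(z)-g(-1)|\leq K_1|z+1|^\alpha$ and $|g(1)-g(z)|\leq K_2|z-1|^\beta$. Introducing $\tilde g(z):=g(z)-g(-1)(1-\eta(z))-g(1)\eta(z)$, which vanishes at $\pm 1$ at rates $\alpha$ and $\beta$, one verifies that the modified basis $\{\omega^{\DE}_i\}$ reproduces the affine part $g(-1)(1-\eta(x))+g(1)\eta(x)$ exactly (up to corrections from replacing $g(\pm 1)$ by $g(\phi(\mp Mh))$, which are exponentially small), so that the first bracket equals the ordinary DE-Sinc interpolation error of $\tilde g$ plus controllable terms. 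A standard DE-Sinc approximation theorem applied to $\tilde g$, with $h$ as in~\eqref{eq:h-DE} and $M,N$ as in~\eqref{eq:M-N-DE}, then yields $\OO(\exp(-\pi dn/\log(2dn/\mu)))$; the rigorous bound supplied in Appendix~\ref{sec:rigorous-proof} is presumably tailored to this step.

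For the second bracket, Theorem~\ref{thm:DE1} provides the uniform-in-$i$ estimate $|g(\phi(ih))-(A^{\DE}_m\boldsymbol{f}^{\DE}_m)_i|=\OO\bigl((\log(2dn/\mu)/n)\exp(-\pi dn/\log(2dn/\mu))\bigr)$. Factoring this out of the sum reduces the task to controlling the Lebesgue constant $\sup_{x\in(-1,1)}\sum_{i=-M}^N|\omega^{\DE}_i(x)|$. The classical estimate $\sum_i|\sinc((\phi^{-1}(x)-ih)/h)|=\OO(\log n)$ governs the interior weights, and the prefactors $1/(1-\eta(\phi(-Mh)))$ and $1/\eta(\phi(Nh))$ appearing in $\omega^{\DE}_{\mp M}$ tend to $1$ as $n\to\infty$ (since $\phi(\pm Mh)\to\pm 1$ doubly exponentially fast), so they contribute only a bounded factor. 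The resulting bound $\OO\bigl((\log n)^2 n^{-1}\exp(-\pi dn/\log(2dn/\mu))\bigr)$ collapses to $\OO(\exp(-\pi dn/\log(2dn/\mu)))$ after absorbing polynomial factors into the exponential, matching the target rate.

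The principal obstacle will be the first step: no theorem in the excerpt directly bounds the modified DE-Sinc interpolation error for a generic antiderivative, so I will have to reduce it by hand to an estimate on the ordinary DE-Sinc approximation of $\tilde g$. The delicate points are tracking the exact identity satisfied by the modified basis on the affine functions $1-\eta$ and $\eta$ (the correct analogue of the fact that in the SE setting of~\eqref{eq:Stenger-C} these identities hold by construction), and ensuring that the endpoint-decay rates $\alpha$ and $\beta$ of $\tilde g$ propagate correctly into the choice~\eqref{eq:M-N-DE} of $M$ and $N$ and into the $\mu=\min(\alpha,\beta)$ appearing in the exponential.
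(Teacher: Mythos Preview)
Your decomposition is correct and matches the paper's structure: the error is split into (i) the modified DE-Sinc interpolation error for $g(x)=\int_{-1}^x f$ and (ii) a sum over $i$ of the pointwise DE-Sinc indefinite-integration errors weighted by $\omega^{\DE}_i(x)$, with the Lebesgue-type bound $\sum_i|\omega^{\DE}_i(x)|=\OO(\log n)$ controlling the latter.

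Two points of comparison are worth noting. First, for term~(i) the paper does not carry out the $\tilde g$ reduction by hand; it simply shows $g\in\mathbf{M}_{\alpha,\beta}(\phi(\domD_d))$ and invokes an existing result (Okayama, \emph{JSIAM Lett.}~5 (2013), Theorem~7) that already gives the $\OO(\exp(-\pi d n/\log(2dn/\mu)))$ bound for the modified basis $\{\omega^{\DE}_i\}$. Your proposed reduction is exactly what underlies that theorem, so it would work, but it is not needed here. Relatedly, your guess about Appendix~\ref{sec:rigorous-proof} is off: that appendix supplies the discretization bound for the Sinc indefinite integration (Theorem~\ref{thm:sinc_int_on_R_disc_err} on $B(\domD_d)$), which the paper uses for term~(ii), not for term~(i). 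Second, for term~(ii) the paper splits further into a discretization piece ($\sum_{j\in\mathbb Z}$ vs.\ the integral) and a truncation piece ($\sum_{j=-M}^N$ vs.\ $\sum_{j\in\mathbb Z}$), bounding them via Theorem~\ref{thm:sinc_int_on_R_disc_err} and a truncation lemma respectively; your shortcut of invoking Theorem~\ref{thm:DE1} directly for $\bigl|g(\phi(ih))-(A^{\DE}_m\boldsymbol{f}^{\DE}_m)_i\bigr|$ is equally valid and arguably tidier, since Theorem~\ref{thm:DE1} already packages both pieces.
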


\subsection{Discussion on the convergence rate}

Roughly speaking,
the convergence rates of
the formulas (DE1)--(DE3)
(described in Theorems~\ref{thm:DE1}--\ref{thm:DE3})
are much higher
than those of
the formulas (SE1)--(SE3)
(described in Theorems~\ref{thm:SE1}--\ref{thm:SE3}).
The considerable difference originates from the variable transformations
employed;
the tanh transformation is employed
in the formulas (SE1)--(SE3),
whereas the DE transformation is employed
in the formulas (DE1)--(DE3).
In view of those theorems more precisely,
the convergence rate of the formula (SE1) is slightly better than
that of the formulas (SE2) and (SE3),
and the convergence rate of the formula (DE1) is slightly better than
that of the formulas (DE2) and (DE3).
However, the difference is relatively slight,
which may be observed from the results of the numerical examples
provided in the next section.

\section{Numerical examples}
\label{sec:numer_result}

In this section, we present a numerical comparison
between the formulas (SE1), (SE2), (SE3), (DE1), (DE2), and (DE3).
The computation was performed on MacBook Pro
with 2.4 GHz quad-core CPU (Intel Core i5) with 16GB memory, running
Mac OS X 10.15.7.
The computation programs were implemented in the C programming language with double-precision
floating-point arithmetic, and compiled with Apple Clang version 12.0.0
with the `\texttt{-O2}' option. We used the GNU Scientific Library to compute
$\Si(x)$ for (SE1) and (DE1).
We used CBLAS for matrix-vector multiplication
for (SE2), (SE3), (DE2), and (DE3).
Source code for all programs is available at
\url{https://github.com/okayamat/sinc-indef}.

We consider the following four examples:
\begin{align}
\label{eq:example1}
 \int_{-1}^x \frac{1}{\pi\sqrt{1 - s^2}}\dd s
&=\frac{1}{\pi}\left(\arcsin x + \frac{\pi}{2}\right),\\
\label{eq:example2}
 \int_{-1}^x \frac{1}{4\log 2}\log\left(\frac{1+s}{1-s}\right)\dd s
&=\frac{1}{4\log 2}\left\{
\log(1+x)^{1+x} + \log(1-x)^{1-x} - 2\log 2
\right\},
\\
\label{eq:example3}
 \int_{-1}^x \frac{2}{\pi(1 + s^2)}\dd s
&=\frac{1}{2} + \frac{2}{\pi}\arctan x,\\
\label{eq:example4}
 \int_{-1}^x
\frac{-2[s \{\cos(4\artanh s) + \cosh(\pi)\} + \sin(4\artanh s)]}{\sqrt{\cos(4\artanh s) + \cosh(\pi)}}
\dd s
&=(1-x^2)\sqrt{\cos(4\artanh x) + \cosh(\pi)}.
\end{align}

The integrand in~\eqref{eq:example1}
satisfies the assumptions of
Theorems~\ref{thm:SE1}, \ref{thm:SE2}, and \ref{thm:SE3}
with $\alpha=\beta=1/2$
and $d = \pi_{-}$, where $\pi_{-}$ denotes a slightly smaller number
than $\pi$ (note that $d < \pi$).
We chose $\pi_{-} = 3.14$ in the actual computation.
It also satisfies the assumptions of
Theorems~\ref{thm:DE1}, \ref{thm:DE2}, and \ref{thm:DE3}
with $\alpha=\beta=1/2$ and $d = \pi_{-}/2$ (note that $d < \pi/2$).
The integrand in~\eqref{eq:example2}
satisfies the assumptions of
Theorems~\ref{thm:SE1}, \ref{thm:SE2}, and \ref{thm:SE3}
with $\alpha=\beta=1_{-}$ and $d = \pi_{-}$,
where $1_{-}$ denotes a slightly smaller number than $1$.
We chose $1_{-} = 0.99$ in actual computation.
It also satisfies the assumptions of
Theorems~\ref{thm:DE1}, \ref{thm:DE2}, and \ref{thm:DE3}
with $\alpha=\beta=1_{-}$ and $d = \pi_{-}/2$.
The integrand in~\eqref{eq:example3}
satisfies the assumptions of
Theorems~\ref{thm:SE1}, \ref{thm:SE2}, and \ref{thm:SE3}
with $\alpha=\beta=1$ and $d = \pi_{-}/2$.
It also satisfies the assumptions of
Theorems~\ref{thm:DE1}, \ref{thm:DE2}, and \ref{thm:DE3}
with $\alpha=\beta=1$ and $d = \pi_{-}/6$.

In the case of~\eqref{eq:example4},
the situation is different from the examples above.
The integrand in~\eqref{eq:example4}
satisfies the assumptions of
Theorems~\ref{thm:SE1}, \ref{thm:SE2}, and \ref{thm:SE3}
with $\alpha=\beta=1$ and $d = \pi_{-}/2$,
but it does not satisfy the assumptions of
Theorems~\ref{thm:DE1}, \ref{thm:DE2}, and \ref{thm:DE3}.
According to Tanaka et al.~\cite{TanOkaMatSug},
even in this case, the formulas with the DE transformation
can attain the similar convergence rate to
the formulas with the tanh transformation,
with the same $\alpha$ and $\beta$ and $d = \pi_{-}/6$.
Therefore, we used these values in the computations
of the formulas (DE1), (DE2), and (DE3) in this case.

We investigated the maximum errors among
1000 equally-spaced points over the interval $(-1, 1)$.
The results are shown in
Figures~\ref{Fig:Ex1error}--\ref{Fig:Ex4time}.
From Figures~\ref{Fig:Ex1error},
\ref{Fig:Ex2error},
\ref{Fig:Ex3error},
and~\ref{Fig:Ex4error},
it may be observed that
the formulas (SE1)--(SE3) exhibited almost the same convergence profile
with respect to $n$.
The same holds true for the formulas (DE1)--(DE3),
and they were more accurate than the formulas (SE1)--(SE3).
In Figure~\ref{Fig:Ex4error}, the rate of the formulas (DE1)--(DE3)
worsened.
This occurred because the assumptions of
Theorems~\ref{thm:DE1}, \ref{thm:DE2}, and \ref{thm:DE3} are
not satisfied in the case of~\eqref{eq:example4}.
However, they still exhibited similar convergence profiles to
the formulas (SE1)--(SE3) even in this case.

Figures~\ref{Fig:Ex1time},
\ref{Fig:Ex2time},
\ref{Fig:Ex3time},
and~\ref{Fig:Ex4time} show the maximum errors with respect to
the computation time.
From Figures~\ref{Fig:Ex1time},
\ref{Fig:Ex2time},
and~\ref{Fig:Ex3time},
it may be observed that the formula (DE2) was the most efficient,
and the formula (DE3) was the second-most efficient.
Although the same cannot be said for Figure~\ref{Fig:Ex4time},
the two formulas still exhibited a similar convergence profile to
the formulas (SE2) and (SE3).
The formulas (SE1) and (DE1) were not so efficient for all examples.
This may be attributed to the special function $\Si(x)$ included
in the basis functions of the two formulas.

\begin{figure}
\begin{center}
 \begin{minipage}{0.45\linewidth}
  \includegraphics[width=\linewidth]{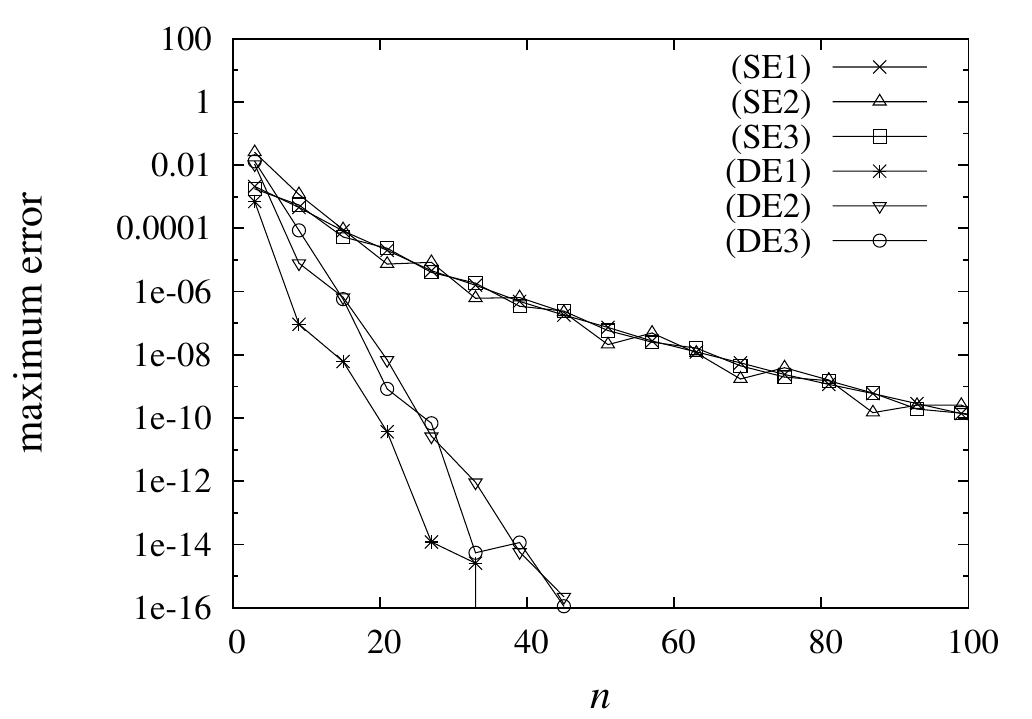}
  \caption{Convergence with respect to $n$ for~\eqref{eq:example1}.}
  \label{Fig:Ex1error}
 \end{minipage}
 \begin{minipage}{0.01\linewidth}
 \mbox{ }
 \end{minipage}
 \begin{minipage}{0.45\linewidth}
  \includegraphics[width=\linewidth]{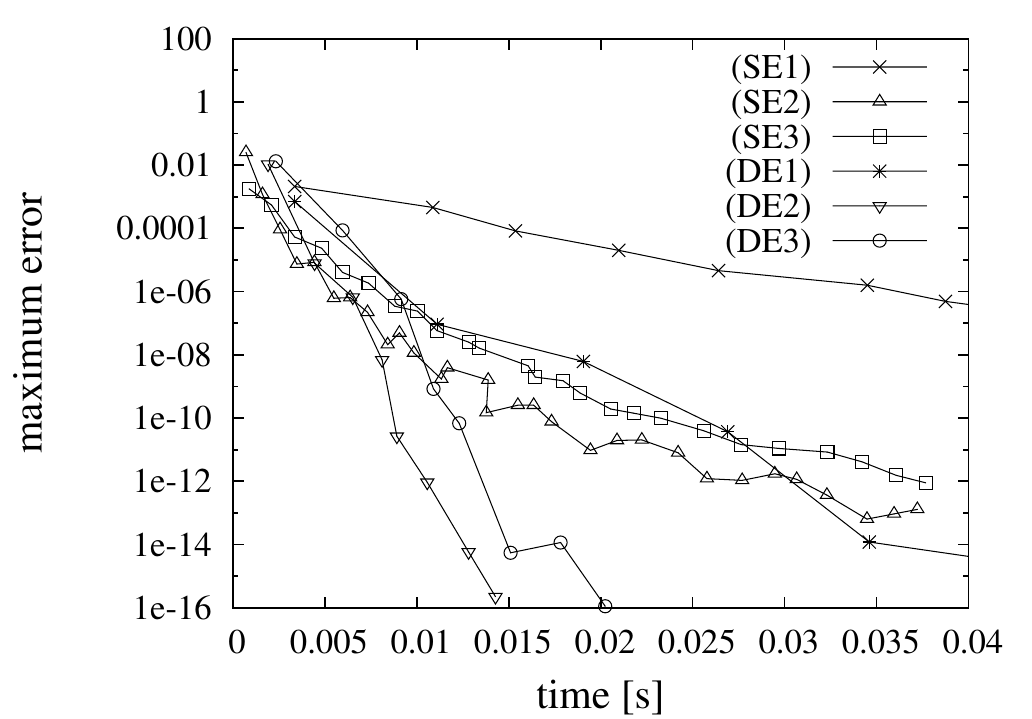}
  \caption{Convergence with respect to computation time for~\eqref{eq:example1}.}
  \label{Fig:Ex1time}
 \end{minipage}
\end{center}
\begin{center}
 \begin{minipage}{0.45\linewidth}
  \includegraphics[width=\linewidth]{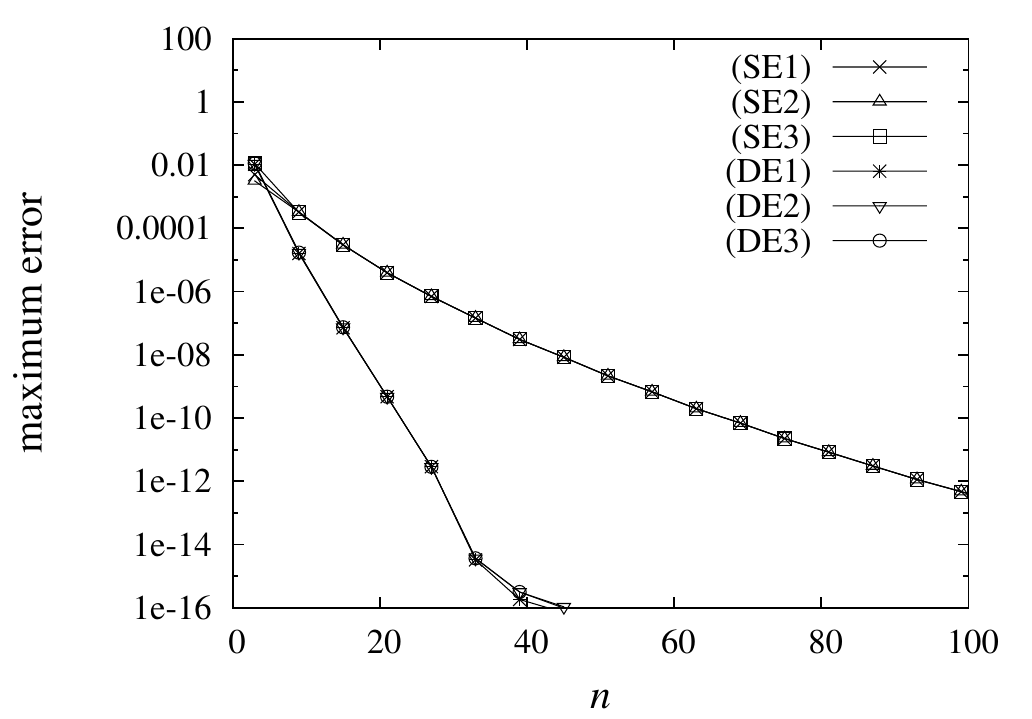}
  \caption{Convergence with respect to $n$ for~\eqref{eq:example2}.}
  \label{Fig:Ex2error}
 \end{minipage}
 \begin{minipage}{0.01\linewidth}
 \mbox{ }
 \end{minipage}
 \begin{minipage}{0.45\linewidth}
  \includegraphics[width=\linewidth]{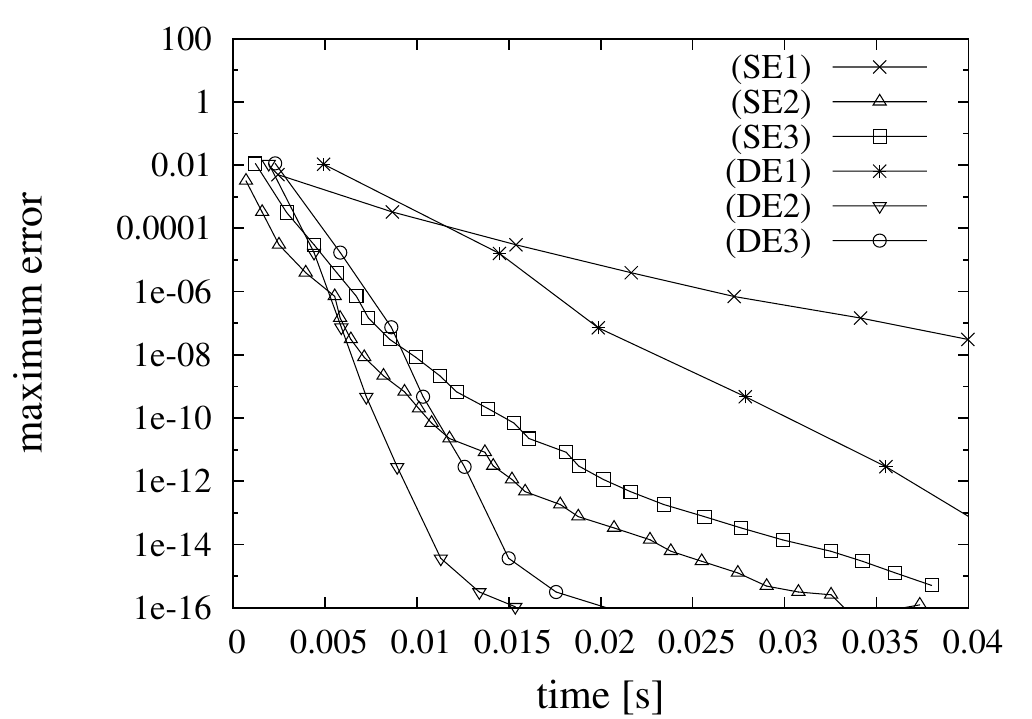}
  \caption{Convergence with respect to computation time for~\eqref{eq:example2}.}
  \label{Fig:Ex2time}
 \end{minipage}
\end{center}
\begin{center}
 \begin{minipage}{0.45\linewidth}
  \includegraphics[width=\linewidth]{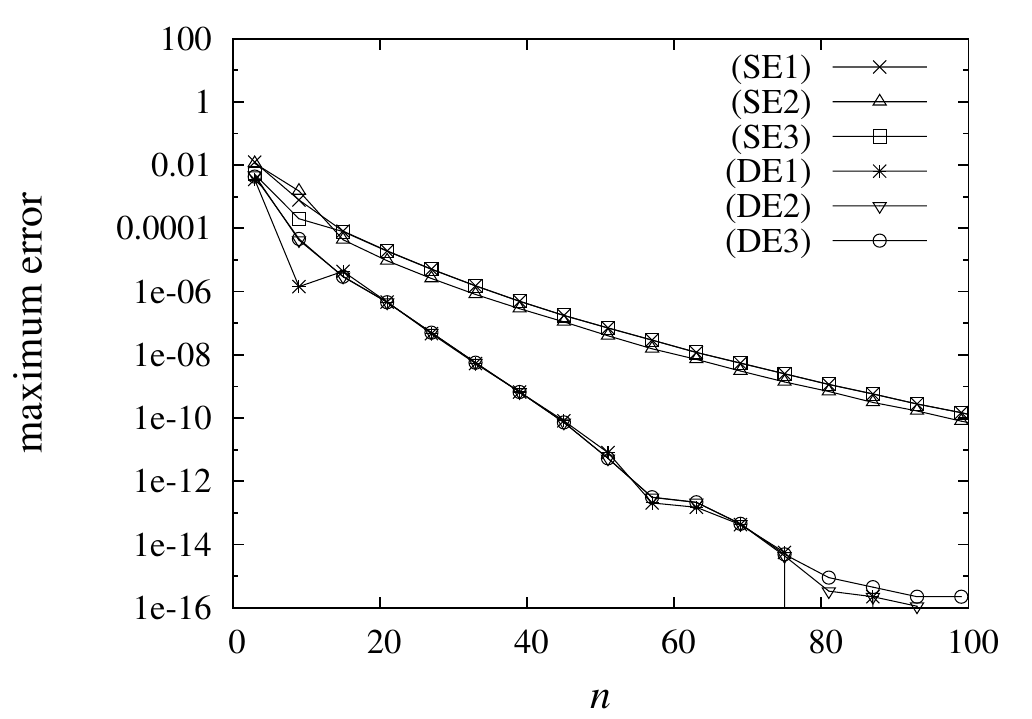}
  \caption{Convergence with respect to $n$ for~\eqref{eq:example3}.}
  \label{Fig:Ex3error}
 \end{minipage}
 \begin{minipage}{0.01\linewidth}
 \mbox{ }
 \end{minipage}
 \begin{minipage}{0.45\linewidth}
  \includegraphics[width=\linewidth]{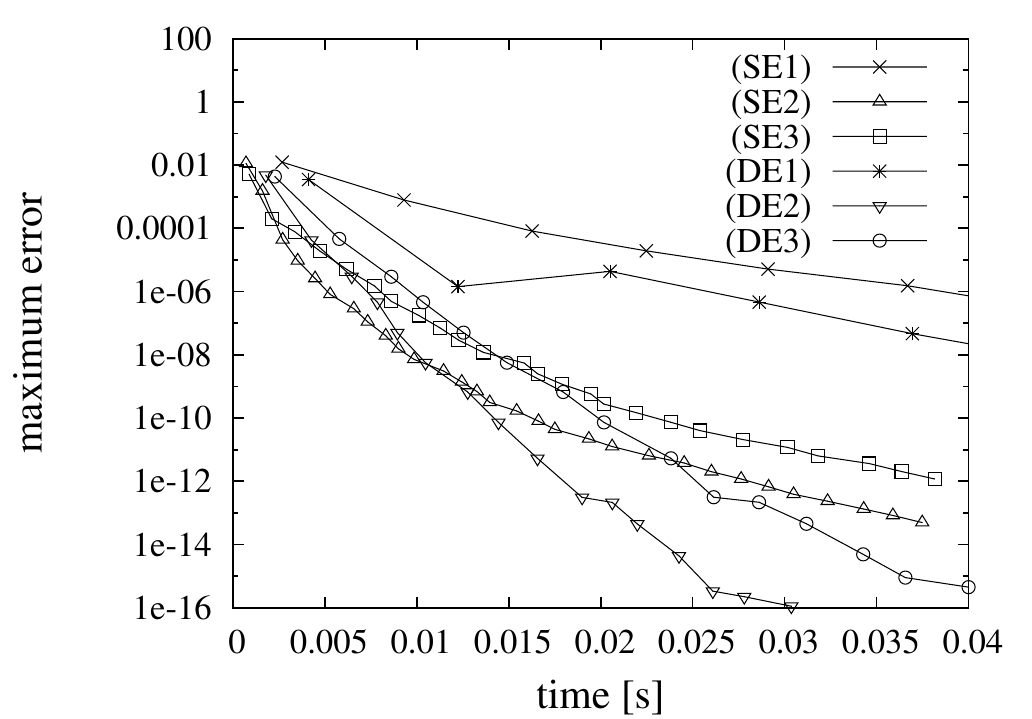}
  \caption{Convergence with respect to computation time for~\eqref{eq:example3}.}
  \label{Fig:Ex3time}
 \end{minipage}
\end{center}
\end{figure}
\begin{figure}
\begin{center}
 \begin{minipage}{0.45\linewidth}
  \includegraphics[width=\linewidth]{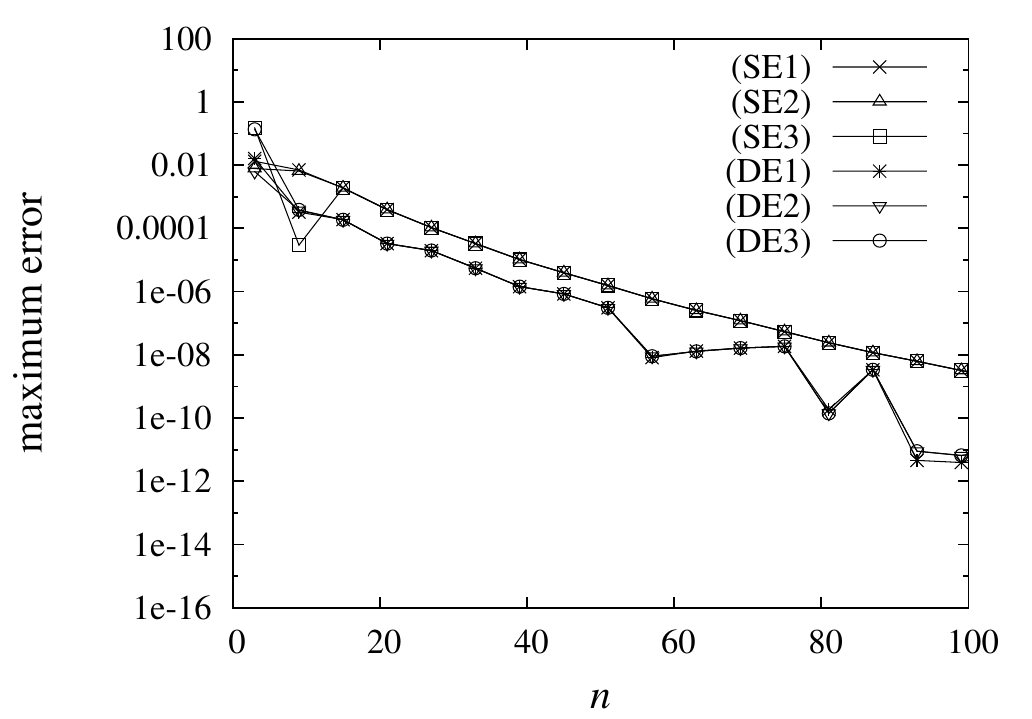}
  \caption{Convergence with respect to $n$ for~\eqref{eq:example4}.}
  \label{Fig:Ex4error}
 \end{minipage}
 \begin{minipage}{0.01\linewidth}
 \mbox{ }
 \end{minipage}
 \begin{minipage}{0.45\linewidth}
  \includegraphics[width=\linewidth]{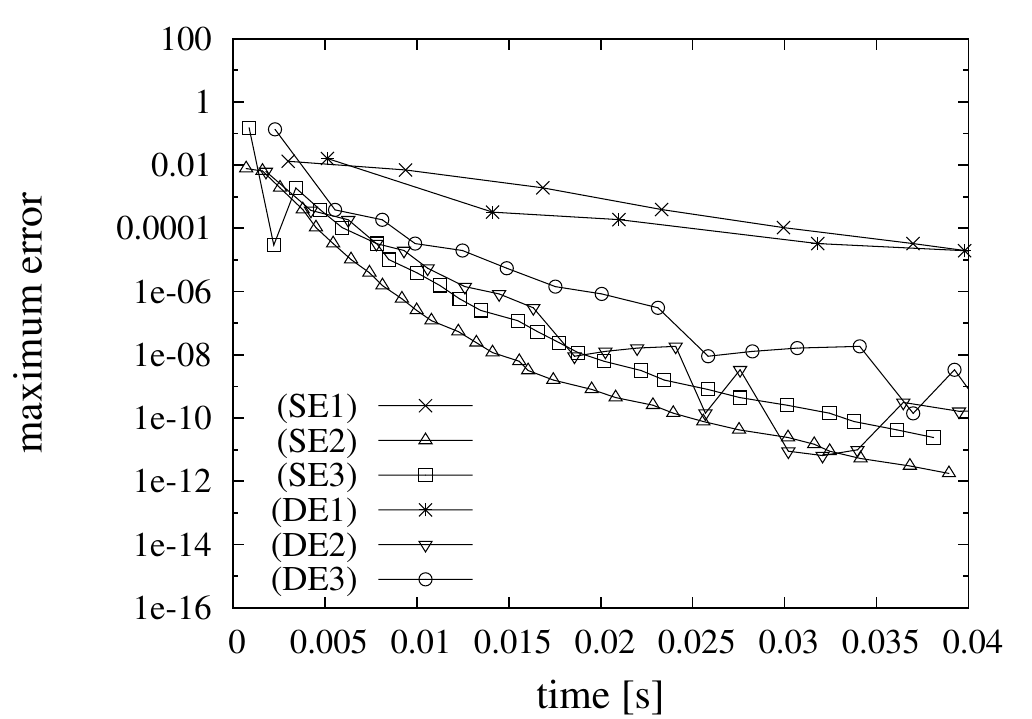}
  \caption{Convergence with respect to computation time for~\eqref{eq:example4}.}
  \label{Fig:Ex4time}
 \end{minipage}
\end{center}
\end{figure}

\section{Proofs}
\label{sec:proofs}

In this section, we provide a proof of Theorem~\ref{thm:DE3}.


\subsection{Sketch of the proof}
\label{sec:sketch-proof}

First, we explain the organization of the proof.
Let $F(u)=f(\phi(u))\phi'(u)$ and
\[
 g(x)=\int_{-1}^x f(s)\dd s
 = \int_{-\infty}^{\phi^{-1}(x)} f(\phi(u))\phi'(u)\dd u
 = \int_{-\infty}^{\phi^{-1}(x)} F(u)\dd u.
\]
The main strategy in the proof of Theorem~\ref{thm:DE3} is
to split the error into three terms as follows:
\begin{align*}
\left|\int_{-1}^x f(s)\dd s
 - \boldsymbol{\omega}^{\DE}_m(x) A^{\DE}_m\boldsymbol{f}_m
\right|
&=\left|g(x) - \sum_{i=-M}^N
\left(\sum_{j=-M}^N F(jh) J(j,h)(ih)
\right)\omega^{\DE}_i(x)
\right|\\
&\leq\left|
g(x) - \sum_{i=-M}^N g(\phi(ih))\omega^{\DE}_i(x)
\right|
\\
&\quad
 + \left|
\sum_{i=-M}^N \left(\int_{-\infty}^{ih} F(u)\dd u
- \sum_{j=-\infty}^{\infty}F(jh)J(j,h)(ih)\right)\omega^{\DE}_i(x)
\right|\\
&\quad
+\left|
\sum_{i=-M}^N \left(\sum_{j=-\infty}^{\infty}F(jh)J(j,h)(ih)
-\sum_{j=-M}^{N}F(jh)J(j,h)(ih)
\right)\omega^{\DE}_i(x)
\right|.
\end{align*}
In Section~\ref{sec:bound-first-term}, we bound the first term as
\begin{equation}
\left|
g(x) - \sum_{i=-M}^N g(\phi(ih))\omega^{\DE}_i(x)\right|
\leq
C_1 \exp\left(-\frac{\pi d n}{\log(2 d n/\mu)}\right).
\label{eq:bound-first-term}
\end{equation}
In Section~\ref{sec:bound-second-term}, we bound the second term as
\begin{equation}
 \left|
\sum_{i=-M}^N \left(\int_{-\infty}^{ih} F(u)\dd u
- \sum_{j=-\infty}^{\infty}F(jh)J(j,h)(ih)\right)\omega^{\DE}_i(x)
\right|
\leq
C_2 \log(n+1)
\frac{\log(2 d n/\mu)}{n}
\exp\left(-\frac{\pi d n}{\log(2 d n/\mu)}\right).
\label{eq:bound-second-term}
\end{equation}
In Section~\ref{sec:bound-third-term}, we bound the third term as
\begin{equation}
\left|
\sum_{i=-M}^N \left(\sum_{j=-\infty}^{\infty}F(jh)J(j,h)(ih)
-\sum_{j=-M}^{N}F(jh)J(j,h)(ih)
\right)\omega^{\DE}_i(x)
\right|
\leq C_3 \log(n+1)
\exp\left(-\pi d n\right).
\label{eq:bound-third-term}
\end{equation}
Here, $C_1$, $C_2$, and $C_3$ are constants, independent of $n$.
This completes the proof of Theorem~\ref{thm:DE3}.

\subsection{Bound of the first term}
\label{sec:bound-first-term}

The following function space is crucial in the analysis.

\begin{definition}
Let $\domD$
be a bounded and simply-connected
domain (or Riemann surface) that contains the interval $(a, b)$.
Let $\alpha$ and $\beta$ be
positive constants with $\alpha\leq 1$ and $\beta\leq 1$.
Then, $\mathbf{M}_{\alpha,\beta}(\domD)$ denotes the family of all functions
$f$ that are analytic and bounded on $\domD$ and satisfy the
following inequalities with a constant $C$
\begin{align*}
|f(z) - f(a)| &\leq C|z - a|^{\alpha},\\
|f(b) - f(z)| &\leq C|b - z|^{\beta}
\end{align*}
for all $z\in\domD$.
\end{definition}

Hereafter, we set $(a, b) = (-1, 1)$.
The next error analysis was given for functions belonging to this function space.

\begin{theorem}[Okayama~{\cite[Theorem 7]{Okayama1}}]
Assume that $f\in\mathbf{M}_{\alpha,\beta}(\phi(\domD_d))$ for $d$
with $0<d<\pi/2$.
Let $\mu = \min\{\alpha, \beta\}$,
let $n$ be a positive integer, and let $h$ be selected
by~\eqref{eq:h-DE}.
Furthermore, let $M$ and $N$ be positive integers defined
by~\eqref{eq:M-N-DE}.
Then, there exists a constant $C$ independent of $n$ such that
\[
 \sup_{x\in (-1,1)}\left|
f(x) - \sum_{i=-M}^N f(\phi(ih))\omega^{\DE}_i(x)
\right|
\leq C \exp\left(-\frac{\pi d n}{\log(2 d n/\mu)}\right).
\]
\end{theorem}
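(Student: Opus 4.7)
The plan is to convert the error $f - S[f]$ of the boundary-corrected interpolation $S[f](x):=\sum_{i=-M}^N f(\phi(ih))\omega^{\DE}_i(x)$ into the ordinary truncated Sinc--DE interpolation error of a carefully chosen modified function, and then invoke the classical sinc interpolation theory on the resulting ``vanishing-at-endpoints'' problem.

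The first step is a purely algebraic identity. Set
\[
 A:=\frac{f(\phi(-Mh))}{1-\eta(\phi(-Mh))},\qquad B:=\frac{f(\phi(Nh))}{\eta(\phi(Nh))},\qquad G(x):=f(x)-A(1-\eta(x))-B\eta(x).
\]
Plugging the defining expressions for $\omega^{\DE}_{-M}$ and $\omega^{\DE}_N$ into $S[f]$ and collecting the coefficient of each $\sinc\bigl((\phi^{-1}(x)-kh)/h\bigr)$, that coefficient reduces (using $f(\phi(-Mh))=A(1-\eta(\phi(-Mh)))$ and $f(\phi(Nh))=B\eta(\phi(Nh))$) to exactly $G(\phi(kh))$ for every $-M\le k\le N$, including the two endpoint indices. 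Since $f(x) = G(x)+A(1-\eta(x))+B\eta(x)$, this yields the clean identity
\[
 f(x) - S[f](x) \;=\; G(x) - \sum_{k=-M}^N G(\phi(kh))\,\sinc\!\Bigl(\frac{\phi^{-1}(x)-kh}{h}\Bigr),
\]
so the theorem reduces to a standard truncated Sinc--DE interpolation error bound for~$G$.

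Second, I would verify that $G$ is well-suited to the classical Sinc--DE theory. $G$ is analytic on $\phi(\domD_d)$ because $\eta$ is affine and $A,B$ are constants. Writing $G=H+(f(-1)-A)(1-\eta)+(f(1)-B)\eta$ with $H(x):=f(x)-f(-1)(1-\eta(x))-f(1)\eta(x)$, the Hölder conditions in $\mathbf{M}_{\alpha,\beta}$ yield $H(\pm 1)=0$ together with uniform bounds $|H(z)|\le C|z+1|^\alpha$ and $|H(z)|\le C|z-1|^\beta$ on $\phi(\domD_d)$. The corrections $|f(-1)-A|$ and $|f(1)-B|$ are doubly-exponentially small because $|\phi(\mp nh)\pm 1|=\OO(\exp(-(\pi/2)\sinh(nh)))$, so $G$ inherits the same two-sided Hölder decay with a constant uniform in~$n$, and vanishes at the endpoints up to a super-exponentially small error. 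Composing with $\phi$ therefore gives $|G(\phi(u))|=\OO(\exp(-\mu(\pi/2)\exp|u|))$ as $|u|\to\infty$, with $G\circ\phi$ analytic on~$\domD_d$.

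The standard DE-Sinc interpolation argument---bound the infinite-sum Sinc interpolation error of $G\circ\phi$ by $\OO(\exp(-\pi d/h))$ via contour integration in $\domD_d$, bound the truncation tails $|k|>n$ by the endpoint decay just obtained, and choose $h$ as in~\eqref{eq:h-DE} and $M,N$ as in~\eqref{eq:M-N-DE} to balance the two---then delivers the bound $\OO(\exp(-\pi dn/\log(2dn/\mu)))$, completing the proof. The main obstacle is the uniformity in the second step: although $A$ and $B$ themselves depend on $n$, one must show that the Hölder constants for $G$ remain bounded as $n\to\infty$. Because $A\to f(-1)$ and $B\to f(1)$ doubly-exponentially fast, the perturbation is absorbed into the Hölder constants inherited from $f$; making this precise so that the final constant $C$ can be chosen independently of $n$, with no spurious logarithmic factor, is the most delicate bookkeeping.
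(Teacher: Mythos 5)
The paper does not prove this theorem: it is imported verbatim from Okayama~\cite[Theorem~7]{Okayama1} and used as a black box to establish the bound~\eqref{eq:bound-first-term}, so there is no in-paper proof to compare against. Judged on its own, your reduction is correct and is essentially the standard route for such boundary-treatment results. I checked the algebraic identity: with $A=f(\phi(-Mh))/(1-\eta(\phi(-Mh)))$ and $B=f(\phi(Nh))/\eta(\phi(Nh))$, the coefficient of $\sinc\bigl((\phi^{-1}(x)-kh)/h\bigr)$ in $S[f]$ is indeed $G(\phi(kh))$ for every $k$, including $k=-M$ (where it equals $-B\eta(\phi(-Mh))$) and $k=N$ (where it equals $-A(1-\eta(\phi(Nh)))$), so the error of the corrected scheme for $f$ equals the error of the plain truncated DE--Sinc interpolation for $G$. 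The remaining steps are the standard discretization-plus-truncation analysis, and your identification of the uniformity of the H\"older constants as the delicate point is apt.

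One imprecision worth flagging: the claim that $|G(\phi(u))|=\OO(\exp(-\mu(\pi/2)\exp|u|))$ as $|u|\to\infty$ is not literally true, because $G(\phi(u))\to f(-1)-A\neq 0$ as $u\to-\infty$ (and similarly at $+\infty$); only $H\circ\phi$ has that decay. If you fed $G$ itself into the tail estimate, the sum $\sum_{k<-M}|G(\phi(kh))|\,|\sinc((\phi^{-1}(x)-kh)/h)|$ would not even converge, since the sinc factors decay only like $1/|k|$. The fix is already implicit in your decomposition: exploit linearity of the error operator $E[g]=g-\sum_{k=-M}^{N}g(\phi(kh))\sinc(\cdot)$, apply the standard DE--Sinc theory to $E[H]$ (where $H$ genuinely vanishes at the endpoints with the stated H\"older rates), and bound $E[(f(-1)-A)(1-\eta)]$ and $E[(f(1)-B)\eta]$ crudely by the Lebesgue constant $\OO(\log n)$ times the doubly-exponentially small coefficients $|f(-1)-A|,|f(1)-B|=\OO(\exp(-\pi dn))$, which is negligible against the target rate. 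With that adjustment made explicit, the argument is complete.
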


From this theorem,
we obtain~\eqref{eq:bound-first-term} using the following lemma.

\begin{lemma}
Assume that $f$ is analytic in $\phi(\domD_d)$ for $d$ with $0<d<\pi/2$,
and that there exist positive constants $K$, $\alpha$ with $\alpha\leq 1$
and $\beta$ with $\beta\leq 1$
such that~\eqref{eq:f-bound-alpha-beta}
holds for all $z\in\phi(\domD_d)$.
Then, setting $g(x) = \int_{-1}^x f(s)\dd s$, we have
$g\in\mathbf{M}_{\alpha,\beta}(\phi(\domD_d))$.
\end{lemma}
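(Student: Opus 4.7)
My plan is to verify the three defining properties of $\mathbf{M}_{\alpha,\beta}(\phi(\domD_d))$ for $g$: (i) analyticity and boundedness on $\phi(\domD_d)$, (ii) the H\"older-type bound $|g(z)-g(-1)|\leq C|z+1|^\alpha$, and (iii) its mirror $|g(1)-g(z)|\leq C|1-z|^\beta$. Analyticity of $g$ is immediate because $f$ is analytic on $\phi(\domD_d)$ and $g$ is an antiderivative. For boundedness, I would observe that for $0<d<\pi/2$ the domain $\phi(\domD_d)$ is bounded (the only poles of $\tanh((\pi/2)\sinh\cdot)$ on the closed strip lie at $|\Im u|=\pi/2$), hence has compact closure in $\CC$; the hypotheses $\alpha,\beta>0$ ensure $f$ is integrable near both endpoints, so $g$ extends continuously to the closure with $g(-1)=0$ and $g(1)=\int_{-1}^{1}f(s)\dd s$, and is therefore bounded.

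For (ii), since $g(-1)=0$ the inequality reads $|g(z)|\leq C|z+1|^\alpha$, and I would argue by cases on $|z+1|$. If $|z+1|\geq \delta$ for a fixed small $\delta>0$, the bound follows at once from the uniform boundedness of $g$. For the nontrivial regime $|z+1|<\delta$, I change variables to the $u$-plane: with $\zeta=\phi^{-1}(z)$ and the horizontal half-line $u(x)=x+\ii\Im(\zeta)$, $x\in(-\infty,\Re(\zeta)]$, which lies in $\domD_d$, one has
\[
 g(z)=\int_{-\infty}^{\Re(\zeta)} f(\phi(u(x)))\,\phi'(u(x))\dd x.
\]
Using the elementary identities $\tanh(w)+1=2/(1+\ee^{-2w})$ and $1/\cosh^2(w)=4/(\ee^w+\ee^{-w})^2$ together with the hypothesis $|f(z)|\leq K|z+1|^{\alpha-1}|z-1|^{\beta-1}$, the integrand is bounded pointwise on the half-line by $C\,|\cosh u|\,\ee^{\alpha\pi\Re(\sinh u)}$ with $C$ independent of $u$. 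The substitution $v=\sinh x$ then evaluates the resulting single-variable integral explicitly and gives a quantity proportional to $\ee^{\alpha\pi\sinh(\Re(\zeta))\cos(\Im(\zeta))}/\cos(\Im(\zeta))$. The same asymptotic identity applied at $u=\zeta$ shows that $|z+1|$ is comparable to $\ee^{\pi\sinh(\Re(\zeta))\cos(\Im(\zeta))}$, whence this quantity is at most $C'|z+1|^\alpha$; since $|\Im(\zeta)|<d<\pi/2$ gives $\cos(\Im(\zeta))\geq\cos d>0$, the constant $C'$ is uniform.

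Inequality (iii) follows by the symmetric argument, either directly by integrating along the horizontal half-line $x\in[\Re(\zeta),\infty)$ and invoking $|\phi(u)-1|\to 0$ exponentially as $\Re(\sinh u)\to+\infty$, or by applying the involution $z\mapsto -z$, which maps $\phi(\domD_d)$ onto itself and reduces (iii) to the case already handled. The main obstacle I expect is converting the leading-order asymptotics of $\phi(u)\pm 1$ and $\phi'(u)$ into explicit two-sided inequalities uniformly valid on the relevant portion of $\domD_d$, so that the constant $C$ in the final estimate can be tracked; this is elementary but slightly tedious, and it is the arithmetic step that must be carried out carefully. No new conceptual idea beyond the change of variables to the $u$-plane and the case split on $|z+1|$ is required.
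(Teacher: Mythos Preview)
The paper does not actually prove this lemma; it simply remarks that the result is a straightforward extension of \cite[Lemma~5]{Okayama3} from the symmetric case $\mathbf{M}_{\alpha,\alpha}(\phi(\domD_d))$ to $\mathbf{M}_{\alpha,\beta}(\phi(\domD_d))$ and omits the argument. Your sketch is correct and is precisely the standard argument behind that reference: pull back to the strip $\domD_d$ via $\phi^{-1}$, integrate $F(u)=f(\phi(u))\phi'(u)$ along a horizontal half-line, bound $|F|$ using the elementary two-sided estimates for $|1\pm\phi(u)|$ and $|\phi'(u)|$ on $\domD_d$ (these are exactly the estimates appearing in \cite[Lemma~4.20]{OkayamaMM} and related results), and compare the resulting bound with $|z+1|^{\alpha}$ (respectively $|1-z|^{\beta}$).
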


We omit the proof of this lemma because it is a straightforward extension
of an existing result~\cite[Lemma 5]{Okayama3}
(from $\mathbf{M}_{\alpha,\alpha}(\phi(\domD_d))$
to $\mathbf{M}_{\alpha,\beta}(\phi(\domD_d))$).
This completes the proof of~\eqref{eq:bound-first-term}.

\subsection{Bound of the second term}
\label{sec:bound-second-term}

The following function space is crucial in the analysis.

\begin{definition}
\label{def:func_space_BDd}
Let $d$ be a positive real number.
Then, $B(\domD_d)$ denotes the family of all functions
$f$ that are analytic on $\domD_d$
and satisfy
\begin{align}
 \varLambda(f,d)
 := \lim_{c\to d - 0}
\int_{-\infty}^{\infty}
\Big( |f(x+\ii c)|+|f(x-\ii c)| \Big) \dd x
< \infty
\label{eq:real_axis_integral_2_in_mathprel}
\end{align}
and
\begin{align}
\lim_{x \to \pm \infty} \int_{-d}^{d} |f(x + \ii y)| \, \dd y = 0.
\label{eq:imaginary_axis_integral_2_in_mathprel}
\end{align}
\end{definition}

The theorem below holds for functions belonging to this function space;
a proof is provided in Section~\ref{sec:rigorous-proof}.

\begin{theorem}
\label{thm:sinc_int_on_R_disc_err}
Assume that $f\in B(\domD_d)$.
Then, for any $h > 0$, we have
\begin{align}
\label{eq:D_error_of_sinc_int}
\sup_{x\in\mathbb{R}}
\left|\int_{-\infty}^{x}f(t)\dd t
- \sum_{j=-\infty}^{\infty}
f(jh)J(j,h)(x)
\right|
\leq
\frac{4 h \ee^{-\pi d/h}}{\pi d (1 - \ee^{-2\pi d /h})}\varLambda(f,d).
\end{align}
\end{theorem}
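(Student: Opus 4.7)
The plan is to adapt the standard Stenger-type contour-integration argument for Sinc quadrature error to the present indefinite-integration setting. The proof splits into three ingredients: expressing the error as the integral of the pointwise Sinc interpolation error, introducing a residue-based contour representation of that pointwise error, and bounding the resulting oscillatory kernel uniformly in the free upper limit $x$.

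First, because $J(j,h)(x)=\int_{-\infty}^{x}\sinc((u-jh)/h)\dd u$, the absolute convergence of the Sinc series guaranteed by the $B(\domD_d)$ hypothesis lets me rewrite
\[
\int_{-\infty}^{x} f(t)\dd t - \sum_{j=-\infty}^{\infty} f(jh)J(j,h)(x) = \int_{-\infty}^{x}\left\{f(t) - \sum_{j=-\infty}^{\infty} f(jh)\sinc\left(\frac{t-jh}{h}\right)\right\}\dd t.
\]
Next, I would invoke the standard residue representation: applying Cauchy's theorem to $f(z)\sin(\pi t/h)/[(z-t)\sin(\pi z/h)]$ on a tall rectangle inscribed in $\domD_d$ (the vertical sides vanish in the limit by~\eqref{eq:imaginary_axis_integral_2_in_mathprel}) and collecting the simple poles at $z=t$ (contributing $f(t)$) and at $z=jh$ (contributing $-f(jh)\sinc((t-jh)/h)$) yields
\[
f(t) - \sum_{j=-\infty}^{\infty} f(jh)\sinc\left(\frac{t-jh}{h}\right) = \frac{\sin(\pi t/h)}{2\pi\ii}\int_{\partial\domD_d}\frac{f(z)\dd z}{(z-t)\sin(\pi z/h)},
\]
where $\partial\domD_d$ denotes the boundary $\{\Im z=\pm d\}$ with the induced orientation.

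Integrating this identity in $t$ over $(-\infty,x)$ and exchanging the order of integration (legitimate via a regularized limit $\int_{-R}^{x}\to\int_{-\infty}^{x}$, using~\eqref{eq:real_axis_integral_2_in_mathprel}) produces
\[
\int_{-\infty}^{x}[\cdots]\dd t = \frac{1}{2\pi\ii}\int_{\partial\domD_d}\frac{f(z)}{\sin(\pi z/h)}\,T(x,z)\dd z,\qquad T(x,z):=\int_{-\infty}^{x}\frac{\sin(\pi t/h)}{z-t}\dd t.
\]
To close the argument I would then combine three estimates: the bound $|\sin(\pi z/h)|^{-1}\leq 2\ee^{-\pi d/h}/(1-\ee^{-2\pi d/h})$ on $\Im z=\pm d$; the $L^{1}$-estimate $\int_{-\infty}^{\infty}(|f(u+\ii d)|+|f(u-\ii d)|)\dd u\leq\varLambda(f,d)$ coming straight from the definition of $B(\domD_d)$; and a uniform estimate $\sup_{x\in\RR,\,\Im z=\pm d}|T(x,z)|\leq 4h/d$ on the oscillatory kernel. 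Collecting the prefactors gives exactly $\dfrac{4h\ee^{-\pi d/h}}{\pi d(1-\ee^{-2\pi d/h})}\varLambda(f,d)$.

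The main obstacle will be the uniform bound on $T(x,z)$. Because the upper limit $x$ is a free real parameter and the integrand decays only like $1/|t|$ at $-\infty$, neither absolute convergence nor a straight contour deformation applies cleanly: splitting $\sin(\pi t/h) = (\ee^{\ii\pi t/h}-\ee^{-\ii\pi t/h})/(2\ii)$ and pushing each exponential into the appropriate half-plane must correctly handle the possible pole crossing at $t=z$ depending on whether $\Re z \lessgtr x$, producing a residue contribution that must be tracked. A more elementary alternative is integration by parts with $u=1/(z-t)$ and $\dd v=\sin(\pi t/h)\dd t$, which reduces $T(x,z)$ to a boundary term of size at most $h/(\pi d)$ plus an absolutely convergent integral controlled by $\int_{-\infty}^{\infty}|z-t|^{-2}\dd t=\pi/d$, producing an explicit constant of the required $h/d$ scale.
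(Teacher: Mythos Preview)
Your plan reproduces exactly the classical Stenger argument (his Lemma~3.6.4) that the Appendix was written to repair. The paper states explicitly that ``the change of the order of sum and integration was used in the proof, but this is difficult to justify under the given assumptions.'' Your first displayed identity is precisely that interchange: you pass from $\sum_{j} f(jh)J(j,h)(x)$ to $\int_{-\infty}^{x}\sum_{j} f(jh)\sinc((t-jh)/h)\dd t$, claiming the ``absolute convergence of the Sinc series guaranteed by the $B(\domD_d)$ hypothesis.'' But $B(\domD_d)$ does \emph{not} give absolute convergence of $\sum_j f(jh)\sinc((t-jh)/h)$: the sinc factors decay only like $1/|j|$, and the definition of $B(\domD_d)$ controls $\int|f(\cdot\pm\ii d)|$ on the strip boundary, not $\sum|f(jh)|$ on the real axis. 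A second, related interchange hides in your definition of $T(x,z)$: neither the $t$-integral (conditionally convergent, $\sim 1/|t|$ at $-\infty$) nor the $z$-integral along $\partial\domD_d$ is absolutely convergent, so Fubini does not apply, and ``legitimate via a regularized limit'' is not a proof.

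The paper's proof is engineered to avoid both interchanges. It keeps the sum truncated to $|k|\le n$ and the contour finite ($\varGamma_{n,\varepsilon}$) throughout, so that every exchange of finite sums and integrals over compact sets is trivial. It then partitions $\int_{-\infty}^{x}$ into $\int_{-nh}^{x}+\int_{-(n+1)h}^{-nh}+\int_{-\infty}^{-(n+1)h}$, bounds each piece separately using a key lemma that supplies both the horizontal-path estimate $\bigl|\frac{1}{2\pi\ii}\int\frac{\sin(\pi t/h)}{z-t}\dd t\bigr|\le\frac{3h}{4\pi|y|}$ and companion bounds for the vertical sides, and only then lets $n\to\infty$, $\varepsilon\to 0$. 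Your integration-by-parts estimate for $T$ is essentially the horizontal-path bound in that lemma and is fine in isolation; what is missing from your plan is the truncation-and-limit scaffolding that makes the two interchanges rigorous.
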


We see that $F\in B(\domD_d)$ because the next lemma holds.

\begin{lemma}
Assume that $f$ is analytic in $\phi(\domD_d)$ for $d$ with $0<d<\pi/2$,
and that there exist positive constants $K$, $\alpha$ and $\beta$
such that~\eqref{eq:f-bound-alpha-beta}
holds for all $z\in\phi(\domD_d)$.
Then, putting $F(u)=f(\phi(u))\phi'(u)$, we have
$F\in B(\domD_d)$ and
\[
 \varLambda(F,d)\leq \frac{2^{\alpha+\beta+1} K}{\mu \cos^{\alpha+\beta}((\pi/2)\sin d)\cos d},
\]
where $\mu = \min\{\alpha, \beta\}$.
\end{lemma}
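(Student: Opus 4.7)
The plan is to verify the three conditions of Definition~\ref{def:func_space_BDd} for $F$ and to track constants to produce the stated bound on $\varLambda(F,d)$.

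Analyticity on $\domD_d$ is immediate, since $\phi$ is entire and $f$ is analytic on $\phi(\domD_d)$ by hypothesis. For the integrability condition \eqref{eq:real_axis_integral_2_in_mathprel}, the starting point is the pointwise estimate
\[
|F(u)| \leq K\,|1+\phi(u)|^{\alpha-1}\,|1-\phi(u)|^{\beta-1}\,|\phi'(u)|,
\]
which follows directly from~\eqref{eq:f-bound-alpha-beta}. A useful algebraic observation is that $\phi'(u) = (\pi/2)\cosh(u)\,(1-\phi(u)^2)$, so on the real axis the right-hand side reduces to $K(\pi/2)\cosh(x)(1+\phi(x))^{\alpha}(1-\phi(x))^{\beta}$. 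Substituting $s = \phi(x)$ turns the real-axis integral into $K\int_{-1}^{1}(1+s)^{\alpha-1}(1-s)^{\beta-1}\dd s = K B(\alpha,\beta)$, and splitting this Beta integral at $s = 0$ with the trivial bounds $(1-s)^{\beta-1} \leq 1$ on $[-1,0]$ and $(1+s)^{\alpha-1} \leq 1$ on $[0,1]$ gives an upper bound of $K(1/\alpha + 1/\beta) \leq 2K/\mu$.

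To transfer this to a horizontal line $u = x + \ii y$ with $|y| \leq c < d < \pi/2$, I would invoke the standard DE-transformation strip estimates: by computing the real and imaginary parts of $\phi(u) = \tanh((\pi/2)\sinh u)$, one obtains lower bounds on $|1 \pm \phi(x+\ii y)|$ proportional to $(1\pm\phi(x))\cos((\pi/2)\sin y)$, up to a bounded structural factor absorbed into powers of $2$, together with an upper bound on $|\phi'(x+\ii y)|$ controlled by $\phi'(x)/\cos y$. Because $\alpha, \beta \leq 1$, the negative exponents invert the cosine factors, and combining with the real-axis Beta-function estimate yields
\[
\int_{-\infty}^{\infty}|F(x+\ii y)|\dd x \leq \frac{2^{\alpha+\beta}\,K}{\mu\,\cos^{\alpha+\beta}((\pi/2)\sin y)\,\cos y}.
\]
Adding the contributions from $y = \pm c$ and letting $c \to d - 0$ via monotone convergence yields the claimed bound $\varLambda(F,d) \leq 2^{\alpha+\beta+1}K/[\mu\cos^{\alpha+\beta}((\pi/2)\sin d)\cos d]$. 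The vanishing condition \eqref{eq:imaginary_axis_integral_2_in_mathprel} follows from the same pointwise estimate, because $|\phi'(x+\ii y)|$ decays doubly exponentially as $|x|\to\infty$ uniformly over $|y|\leq d$, which drives the fixed-width integral $\int_{-d}^{d}|F(x+\ii y)|\dd y$ to zero.

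The main obstacle is the bookkeeping in the strip estimates for $|1\pm\phi|$ and $|\phi'|$: one must expand the real and imaginary parts of the nested composition $\tanh((\pi/2)\sinh(x+\ii y))$ and extract the cancellations that produce matching powers of $\cos((\pi/2)\sin y)$ and $\cos y$ in the final denominator. Once these estimates are in hand, the Beta-function computation and the emergence of the constant $2^{\alpha+\beta+1}$ are routine.
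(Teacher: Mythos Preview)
The paper does not prove this lemma at all; it simply cites~\cite[Lemma~4.6]{OkayamaMM} as an identical existing result. So there is no in-paper proof to compare against, and your outline---pointwise bound from~\eqref{eq:f-bound-alpha-beta}, strip estimates for $\phi$ and $\phi'$, then integrate---is exactly the standard route taken in that reference.

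That said, the concrete intermediate estimates you write down do not hold and do not assemble into the stated constant. First, the hypothesis ``$\alpha,\beta\leq 1$'' is not part of the lemma; once you absorb $\phi'(u)=(\pi/2)\cosh u\,(1+\phi(u))(1-\phi(u))$ into the product, you obtain
\[
|F(u)|\leq \tfrac{\pi}{2}K\,\frac{|\cosh u|\,e^{(\alpha-\beta)\Re w}}{|\cosh w|^{\alpha+\beta}},\qquad w=\tfrac{\pi}{2}\sinh u,
\]
with \emph{positive} exponents only, so no sign assumption on $\alpha-1,\beta-1$ is needed. Second, your pair of estimates is internally inconsistent: if $|1\pm\phi(x+\ii y)|\gtrsim(1\pm\phi(x))\cos((\pi/2)\sin y)$ held, then since $\phi'=(\pi/2)\cosh\cdot(1+\phi)(1-\phi)$ and $|\cosh(x+\ii y)|\leq\cosh x$, one would get $|\phi'(x+\ii y)|\lesssim\phi'(x)\cos^2((\pi/2)\sin y)$, not $\phi'(x)/\cos y$. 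Third, even granting your estimates, raising to the exponents $\alpha-1$ and $\beta-1$ produces $\cos^{-(2-\alpha-\beta)}((\pi/2)\sin y)$, not the $\cos^{-(\alpha+\beta)}((\pi/2)\sin y)$ that appears in the claimed bound. Finally, the comparison of $|1\pm\phi(x+\ii y)|$ to $(1\pm\phi(x))$ at the \emph{same} $x$ fails outright: the imaginary part of $(\pi/2)\sinh(x+\ii y)$ is $(\pi/2)\cosh x\sin y$, which grows with $x$, so the ``phase'' in the outer $\tanh$ is not controlled uniformly by $\cos((\pi/2)\sin y)$.

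The argument in~\cite{OkayamaMM} instead bounds $|1+e^{\mp\pi\sinh\zeta}|$ (equivalently $|\cosh((\pi/2)\sinh\zeta)|$) directly, after passing to the combined form above; the resulting lower bound compares to the value at a \emph{rescaled} real argument (involving $\sinh x\cos y$), and the change of variables $v=(\pi/2)\sinh x\cos y$ then produces the factors $\cos^{\alpha+\beta}((\pi/2)\sin d)$ and $\cos d$ together with the Beta-type integral you identified. Your Beta-integral splitting and the factor $2^{\alpha+\beta+1}$ are fine once the correct strip estimate is in place.
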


We omit the proof of this lemma as being identical with an existing result~\cite[Lemma 4.6]{OkayamaMM}.
Using this lemma and Theorem~\ref{thm:sinc_int_on_R_disc_err},
we have
\begin{equation}
 \left|
\sum_{i=-M}^N \left(\int_{-\infty}^{ih} F(u)\dd u
- \sum_{j=-\infty}^{\infty}F(jh)J(j,h)(ih)\right)\omega^{\DE}_i(x)
\right|
\leq
\left(\sum_{i=-M}^N |\omega^{\DE}_i(x)|\right)
\cdot \frac{4 h \ee^{-\pi d/h}}{\pi d (1 - \ee^{-2\pi d /h})}\varLambda(F,d).
\label{eq:bound-second-term-2}
\end{equation}
Furthermore, the next lemma holds.

\begin{lemma}
\label{lem:bound-omega-sum}
Let $h>0$, and let $n = \max\{M, N\}$. Then,
there exists a constant $C$ independent of $n$ such that
\[
 \sup_{x\in(-1, 1)}
\sum_{i=-M}^N |\omega^{\DE}_i(x)| \leq C \log(n + 1).
\]
\end{lemma}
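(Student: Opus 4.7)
The plan is to split the sum $\sum_{i=-M}^N|\omega^{\DE}_i(x)|$ according to the three cases in the definition of $\omega^{\DE}_i$, and to reduce each piece to a classical Lebesgue-constant estimate for truncated Sinc interpolation. Writing $\tau=\phi^{-1}(x)/h\in\RR$, the interior terms satisfy $|\omega^{\DE}_i(x)|=|\sinc(\tau-i)|$ for $i=-M+1,\dots,N-1$, while the two boundary entries $\omega^{\DE}_{-M}$ and $\omega^{\DE}_N$ require additional bookkeeping because of the correction involving $\eta$.

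The centerpiece is the classical uniform bound
\[
\sup_{\tau\in\RR}\sum_{i=-M}^{N}|\sinc(\tau-i)|\leq C\log(n+1),
\]
valid for $n=\max\{M,N\}$. A self-contained derivation writes $\tau=m+\theta$ with $m\in\mathbb{Z}$ and $\theta\in[-1/2,1/2]$; using $|\sinc(\tau-i)|=|\sin(\pi\theta)|/(\pi|m+\theta-i|)$ and pulling out the harmless factor $|\sin(\pi\theta)|\leq 1$, the task reduces to controlling $\sum_{|j|\leq M+N+1}1/|j-\theta|$, which is $O(\log(M+N))$ by comparison with the harmonic sum. This immediately disposes of the interior contribution $\sum_{i=-M+1}^{N-1}|\sinc(\tau-i)|$.

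For the boundary entry at $i=-M$, the triangle inequality applied to the defining formula gives
\[
|\omega^{\DE}_{-M}(x)|\leq\frac{1}{1-\eta(\phi(-Mh))}\Bigl(|1-\eta(x)|+\sum_{k=-M+1}^N|1-\eta(\phi(kh))|\,|\sinc(\tau-k)|\Bigr).
\]
Since $\eta(y)=(1+y)/2\in(0,1)$ for $y\in(-1,1)$, each of $|1-\eta(x)|$ and $|1-\eta(\phi(kh))|$ is at most $1$. Moreover, $\phi(-Mh)<0$ for $M\geq 1$ and $h>0$, so $1-\eta(\phi(-Mh))\geq 1/2$ and the prefactor is bounded by $2$. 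Combining with the uniform Sinc estimate above yields $|\omega^{\DE}_{-M}(x)|\leq C\log(n+1)$. A fully symmetric argument, using $\eta(\phi(Nh))\geq 1/2$, handles $|\omega^{\DE}_{N}(x)|$.

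The only non-routine ingredient is the uniform Lebesgue-constant bound for truncated Sinc interpolation, and even this is classical; the remaining steps are direct applications of the triangle inequality together with the elementary sign/monotonicity properties of $\eta\circ\phi$. Summing the three contributions, each of order $\log(n+1)$, delivers the claimed bound.
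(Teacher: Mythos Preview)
Your proof is correct and follows the same route as the paper: the paper simply states that the lemma ``is shown by using'' Stenger's bound $\sup_{t\in\RR}\sum_{i=-n}^n|\sinc((t-ih)/h)|\leq(2/\pi)(3+\log n)$, and you have supplied exactly the details this remark leaves implicit---bounding the interior terms directly by the Lebesgue constant and controlling the two boundary terms $\omega^{\DE}_{-M}$, $\omega^{\DE}_N$ via the triangle inequality together with $1-\eta(\phi(-Mh))\geq 1/2$ and $\eta(\phi(Nh))\geq 1/2$.
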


This lemma is shown by using the next result.

\begin{lemma}[Stenger~{\cite[p.~142]{Stenger}}]
Let $h>0$. Then, it holds that
\[
 \sup_{t\in\RR}\sum_{i=-n}^n \left|\sinc\left(\frac{t - ih}{h}\right)\right|
\leq \frac{2}{\pi}(3 + \log n).
\]
\end{lemma}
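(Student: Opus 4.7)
The inequality is scale-invariant, so setting $\tau=t/h$ reduces the claim to $\sup_{\tau\in\RR}\sum_{i=-n}^n |\sinc(\tau-i)| \le (2/\pi)(3+\log n)$. Since $\sin(\pi(\tau-i)) = (-1)^i\sin(\pi\tau)$ for any integer $i$, the summand equals $|\sin(\pi\tau)|/(\pi|\tau-i|)$ whenever $\tau\notin\mathbb{Z}$, and the sum is even under $i\mapsto -i$; so I assume $\tau\ge 0$ and write $\tau=k+s$ with $k\in\mathbb{Z}_{\ge 0}$ and $s\in[0,1)$.

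The heart of the argument is the centered case $k=0$, i.e.\ $\tau=s\in[0,1)$. Combining the two nearest nodes $i=0,1$,
\[
|\sinc(s)| + |\sinc(s-1)| = \frac{\sin(\pi s)}{\pi s(1-s)} \le \frac{4}{\pi},
\]
using the elementary inequality $\sin(\pi s)\le 4s(1-s)$ on $[0,1]$; the difference $4s(1-s)-\sin(\pi s)$ is symmetric about $s=1/2$, vanishes at $s=0,1/2,1$, and has positive second derivative at $s=1/2$, so it is nonnegative throughout. For the remaining indices, $|\sin(\pi s)|\le 1$ together with $|s-i|\ge|i|$ for $i\le -1$ and $|s-i|\ge i-1$ for $i\ge 2$ gives
\[
\sum_{i\in\{-n,\ldots,n\}\setminus\{0,1\}} |\sinc(s-i)| \le \frac{1}{\pi}(H_n+H_{n-1}) \le \frac{2}{\pi}(1+\log n),
\]
and the two contributions sum to exactly $(2/\pi)(3+\log n)$.

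For $k\ge 1$, the substitution $j=i-k$ rewrites the sum as $\sum_{j=-n-k}^{n-k}|\sinc(s-j)|$. When $1\le k\le n-1$ the shifted window still contains $\{0,1\}$, so the same splitting gives the bound $4/\pi + (1/\pi)(H_{n+k}+H_{n-k-1})$; a one-line check shows that the discrete derivative $1/(n+k+1)-1/(n-k-1)$ is negative, so $k\mapsto H_{n+k}+H_{n-k-1}$ is decreasing and the centered case already dominates. The extreme cases $k\ge n$, where the window sits entirely in $\{j\le 0\}$, reduce to a single monotone harmonic sum bounded by $1/\pi+(1/\pi)\log(2n)$, comfortably within $(2/\pi)(3+\log n)$. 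The main delicate point is precisely this reduction: because the finite index set $\{-n,\ldots,n\}$ is not translation-invariant, shifting $\tau$ into a fundamental domain is not free, and it is the monotonicity of $H_{n+k}+H_{n-k-1}$ in $k$ that keeps the constant uniform in $\tau$.
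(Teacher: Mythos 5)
The paper does not prove this lemma at all --- it is imported verbatim from Stenger's book (the citation to p.~142 is the entire ``proof''), so there is no in-paper argument to compare yours against. On its own terms, your argument follows the standard route: rescale, use $|\sinc(\tau-i)|=|\sin(\pi\tau)|/(\pi|\tau-i|)$, control the two nearest nodes by a constant and the rest by harmonic sums. The central computation is right: $4/\pi$ for the pair $\{0,1\}$ plus $\tfrac{1}{\pi}(H_{n+k}+H_{n-k-1})\le\tfrac{2}{\pi}(1+\log n)$, with the monotonicity of $H_{n+k}+H_{n-k-1}$ in $k$ correctly reducing the interior cases to $k=0$, and the total is exactly $\tfrac{2}{\pi}(3+\log n)$.

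Two points need repair. First, your justification of $\sin(\pi s)\le 4s(1-s)$ is not a proof: a function can vanish at $0$, $1/2$, $1$ and have positive second derivative at $1/2$ while still being negative somewhere in between. The inequality is true, but it needs the sign structure of $g''(s)=-8+\pi^{2}\sin(\pi s)$ for $g(s)=4s(1-s)-\sin(\pi s)$: $g$ is concave near each endpoint and convex on the middle interval, so $g(0)=g(1)=0$, $g'(0)=4-\pi>0$, and the convex minimum $g(1/2)=0$ together force $g\ge 0$. Second, in the regime $k\ge n$ your claimed bound $\tfrac{1}{\pi}+\tfrac{1}{\pi}\log(2n)$ treats every term as $1/(\pi|j|)$, but for $k=n$ the shifted window $\{-2n,\ldots,0\}$ contains $j=0$, whose term is $|\sinc(s)|$ with $s\in[0,1)$; near $s=0$ this is close to $1$ and is not controlled by $1/(\pi s)$, so it must be bounded by $1$ separately. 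The corrected bound $1+\tfrac{1}{\pi}\bigl(1+\log(2n)\bigr)$ still lies below $\tfrac{2}{\pi}(3+\log n)$ for all $n\ge 1$ (the required inequality reduces to $\pi+\log 2-5\le\log n$, and the left side is negative), so the lemma survives, but the case as written is incorrect and should be restated.
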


Therefore, the right-hand side of~\eqref{eq:bound-second-term-2}
is further bounded as
\[
 \left(\sum_{i=-M}^N |\omega^{\DE}_i(x)|\right)
\cdot \frac{4 h \ee^{-\pi d/h}}{\pi d (1 - \ee^{-2\pi d /h})}\varLambda(F,d)
\leq C \log(n+1)
\cdot \frac{4 h \ee^{-\pi d/h}}{\pi d (1 - \ee^{-2\pi d /h})}\varLambda(F,d).
\]
Substituting~\eqref{eq:h-DE} to $h$, we obtain~\eqref{eq:bound-second-term}.

\subsection{Bound of the third term}
\label{sec:bound-third-term}

Note that the left-hand side of~\eqref{eq:bound-third-term}
is rewritten as
\begin{align*}
&\left|
\sum_{i=-M}^N \left(\sum_{j=-\infty}^{\infty}F(jh)J(j,h)(ih)
-\sum_{j=-M}^{N}F(jh)J(j,h)(ih)
\right)\omega^{\DE}_i(x)
\right|\\
&=
 \left|
\sum_{i=-M}^N \left(\sum_{j=-\infty}^{-M-1}F(jh)J(j,h)(ih)
 +
 \sum_{j=N+1}^{\infty}F(jh)J(j,h)(ih)\right)\omega^{\DE}_i(x)
\right|.
\end{align*}

The next lemma is crucial in the analysis.

\begin{lemma}[Okayama et al.~{\cite[Lemma 4.20]{OkayamaMM}}]
Assume that $f$ is analytic in $\phi(\domD_d)$ for $d$ with $0<d<\pi/2$,
and that there exist positive constants $K$, $\alpha$ and $\beta$
such that~\eqref{eq:f-bound-alpha-beta}
holds for all $z\in\phi(\domD_d)$.
Let $\mu = \min\{\alpha, \beta\}$,
let $n$ be a positive integer,
let $M$ and $N$ be positive integers defined
by~\eqref{eq:M-N-DE}
Then, putting $F(u)=f(\phi(u))\phi'(u)$, we have
\[
\sup_{x\in\mathbb{R}}\left|
 \sum_{j=-\infty}^{-M-1}F(jh)J(j,h)(x)
 +
 \sum_{j=N+1}^{\infty}F(jh)J(j,h)(x)
\right|
\leq \tilde{C}
 \exp\left(-\frac{\pi}{2}\mu \exp(nh)\right),
\]
where $\tilde{C}$ is a positive constant independent of $n$.
\end{lemma}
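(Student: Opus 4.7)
The plan is to pull a uniform bound on $J(j,h)(x)$ out of the tail sums, derive double-exponential decay of $|F(jh)|$ from the explicit form of $\phi$ and the assumed bound on $f$, and exploit the asymmetric choice of $M, N$ to match both tails at the common rate $\exp(-(\pi/2)\mu\ee^{nh})$. Since $J(j,h)(x) = (h/\pi)\{\pi/2 + \Si(\pi(x-jh)/h)\}$ and $\Si$ is uniformly bounded on $\RR$, we have $|J(j,h)(x)| \le C_J h$ for a universal constant $C_J$, independently of $x$ and $j$. A termwise absolute-value estimate therefore reduces the problem to controlling $\sum_{j\le -M-1}|F(jh)| + \sum_{j\ge N+1} |F(jh)|$, and uniformity in $x$ becomes automatic.

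For the pointwise estimate of $|F(u)| = |f(\phi(u))||\phi'(u)|$, I use $1 - \phi(u) = 2/(1+\ee^{\pi\sinh u}) \le 2\ee^{-\pi\sinh u}$, $1 + \phi(u) \le 2$, and $\phi'(u) \le 2\pi\cosh(u)\ee^{-\pi\sinh u}$ for $u > 0$, so that the hypothesis $|f(z)| \le K|z+1|^{\alpha-1}|z-1|^{\beta-1}$ yields
\[
 |F(u)| \le C\cosh(u)\exp(-\pi\beta\sinh u) \le C'\exp\bigl(-(\pi\beta/2)\ee^{u}\bigr),
\]
where the polynomial factor $\cosh u$ is absorbed into the double exponential by enlarging the constant. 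A symmetric computation gives $|F(u)| \le C'\exp(-(\pi\alpha/2)\ee^{|u|})$ for $u < 0$.

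The tail sums are then estimated by integral comparison. Substituting $y = (\pi\beta/2)\ee^{xh}$ in
\[
 \sum_{j\ge N+1}\ee^{-(\pi\beta/2)\ee^{jh}} \le \int_{N}^{\infty}\ee^{-(\pi\beta/2)\ee^{xh}}\dd x
\]
yields the majorant $\ee^{-(\pi\beta/2)\ee^{Nh}}/(h\beta\ee^{Nh})$. From~\eqref{eq:M-N-DE} one checks $Nh \ge nh - \max\{0,\log(\beta/\alpha)\}$, so $\beta\ee^{Nh} \ge \mu\ee^{nh}$; the symmetric argument gives $\alpha\ee^{Mh} \ge \mu\ee^{nh}$. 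Combining this with the factor $C_J h$ from the first step, both tails are majorized by $C_J\ee^{-(\pi/2)\mu\ee^{nh}}/(\mu\ee^{nh})$, whose prefactor is bounded in $n$ (indeed tends to zero) and is absorbed into $\tilde{C}$.

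The main obstacle is the algebraic balancing in the third step: the asymmetric shifts $\lfloor(1/h)\log(\beta/\alpha)\rfloor$ and $\lfloor(1/h)\log(\alpha/\beta)\rfloor$ in the definition of $M, N$ must exactly compensate for the disparity between the $\alpha$- and $\beta$-decay rates so that the common exponent $\mu = \min\{\alpha,\beta\}$ emerges simultaneously in both tails. Once this is verified, the pointwise estimates on $F$ and the integral comparison are routine.
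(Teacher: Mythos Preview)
The paper does not give its own proof of this lemma; it is quoted from~\cite[Lemma~4.20]{OkayamaMM} and used as a black box in Section~\ref{sec:bound-third-term}. Your overall strategy---pull out the uniform bound $|J(j,h)(x)|\le C_J h$, establish double-exponential decay of $|F(jh)|$ from the explicit form of $\phi$, and balance the two tails through the asymmetric choice of $M,N$ in~\eqref{eq:M-N-DE}---is the standard one and is essentially what the cited reference does.

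There is one real slip, however. The pointwise inequality
\[
C\cosh(u)\exp(-\pi\beta\sinh u)\;\le\;C'\exp\bigl(-(\pi\beta/2)\ee^{u}\bigr)
\]
is false: the quotient of the two sides equals $\cosh(u)\exp\bigl((\pi\beta/2)\ee^{-u}\bigr)$, which is unbounded as $u\to\infty$. The factor $\cosh u$ is exponential, not ``polynomial'', and cannot be absorbed into the constant while keeping the full exponent $(\pi\beta/2)\ee^{u}$. The repair is immediate: retain $\cosh$ through the integral comparison and use the substitution $v=\sinh(xh)$, whose Jacobian $h\cosh(xh)$ exactly cancels it, giving
\[
\sum_{j\ge N+1}\cosh(jh)\,\ee^{-\pi\beta\sinh(jh)}
\;\le\;\int_{N}^{\infty}\cosh(xh)\,\ee^{-\pi\beta\sinh(xh)}\dd x
=\frac{1}{\pi\beta h}\,\ee^{-\pi\beta\sinh(Nh)}.
\]
Then $\sinh(Nh)\ge(\ee^{Nh}-1)/2$ together with your inequality $\beta\ee^{Nh}\ge\mu\ee^{nh}$ yields the claimed bound exactly as before, after multiplication by $C_J h$. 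The remainder of your argument (the uniform bound on $J$, the verification that $\alpha\ee^{Mh}\ge\mu\ee^{nh}$ and $\beta\ee^{Nh}\ge\mu\ee^{nh}$ from~\eqref{eq:M-N-DE}, and the absorption of the bounded prefactor into $\tilde{C}$) is correct.
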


Using this lemma, we obtain
\[
  \left|
\sum_{i=-M}^N \left(\sum_{j=-\infty}^{-M-1}F(jh)J(j,h)(ih)
 +
 \sum_{j=N+1}^{\infty}F(jh)J(j,h)(ih)\right)\omega^{\DE}_i(x)
\right|
\leq \left(\sum_{i=-M}^N |\omega^{\DE}_i(x)|\right)
\cdot \tilde{C} \exp\left(-\frac{\pi}{2}\mu \exp(nh)\right).
\]
Furthermore, using Lemma~\ref{lem:bound-omega-sum}
and substituting~\eqref{eq:h-DE} to $h$,
we obtain~\eqref{eq:bound-third-term}.

\section{Concluding remarks}
\label{sec:conclusion}

Based on the Sinc indefinite integration, several numerical approximation formulas
of the form~\eqref{eq:general-form-indef} have been presented in the literature.
The formula (SE1) was derived
by combining the Sinc indefinite integration with
the tanh transformation.
To remove a special function $\Si(x)$ from the basis functions,
another formula (SE2) was derived.
In addition, yet another formula (SE3) was derived,
which has an elegant matrix-vector form.
These formulas can attain root-exponential convergence $\OO(\exp(-c\sqrt{n}))$.
After a decade, the formulas (SE1) and (SE2) were improved
as the formulas (DE1) and (DE2).
In these two formulas, the tanh transformation is replaced with
the DE transformation,
and the convergence rate is drastically improved to
$\OO(\exp(-c n /\log n))$.
Motivated by this result,
we improved the formula (SE3)
as formula (DE3) in the present work by the same replacement of the variable transformation.
By theoretical analysis,
we have shown that the proposed formula (DE3)
exhibits the same convergence property as the formulas (DE1) and (DE2).
According to the numerical results,
the formula (DE2) seems the most efficient,
followed by the proposed formula (DE3), in all examples
which satisfy the assumptions of Theorems~\ref{thm:DE1}--\ref{thm:DE3}.
We note that
the formula (DE3) exhibited the second-best performance, and
it inherits the beautiful feature of the formula (SE3).
Furthermore, as pointed out by Stenger~\cite[Eq.~(4.6.10)]{Stenger},
the basis function $\omega^{\SE}_{-M}(x)$ can be replaced with
$\sinc((\psi^{-1}(x) + Mh )/h)$ in the case of indefinite integration
(because indefinite integration approaches $0$ as $x\to -1$).
The same applies to the formula (DE3);
$\omega^{\DE}_{-M}(x)$ can be replaced with
$\sinc((\phi^{-1}(x) + Mh )/h)$.
If the replacement were performed in the numerical implementation, the performance gap
between the formulas (DE2) and (DE3) may be expected to narrow.

We consider the following as a future work.
As described in Section~\ref{sec:intro},
the formula (SE3) has already been applied to
indefinite convolution~\cite{StengerC}
and in further application (see references in~\cite{StengerC}).
We are working to apply the formula (DE3) to those applications,
and expect to report the results soon.

\subsection*{Acknowledgments}

This work was partially supported by the JSPS Grantin-Aid for Young
Scientists (B) Number JP17K14147.

\appendix

\section{Rigorous bound of the discretization error of the Sinc indefinite integration}
\label{sec:rigorous-proof}

In this section,
we consider the discretization error of the Sinc indefinite integration,
namely,
\[
\left|
 \int_{-\infty}^x f(t)\dd t
- \sum_{k=-\infty}^{\infty} f(kh) J(k,h)(x)
\right|,
\]
and its bound.
In fact, there exists a result for the bound~\cite[Lemma 3.6.4]{Stenger}
and it has been used as a standard bound of the error,
but we believe that this has not been rigorously proven.
This is because the change of the order of sum and integration
was used in the proof,
but this is difficult to justify under the given assumptions.
Therefore, we present Theorem~\ref{thm:sinc_int_on_R_disc_err} instead,
and provide a proof here.



First, we can derive the simple fact that
functions in $B(\domD_d)$ are integrable on the real line
in the sense of the Riemann integral.

\begin{proposition}
\label{prop:B_D_integrable}
If $f \in B(\domD_d)$, then
\[
 \left|\int_{-\infty}^{\infty} f(x)\dd x \right|
\leq  \frac{1}{2}\varLambda(f,d).
\]
\end{proposition}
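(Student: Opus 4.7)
The plan is to prove the bound by a standard contour-shift argument followed by taking a limit in the imaginary shift. Fix any $c$ with $0 < c < d$ that is large enough so that the shifted integrals $\int_{-\infty}^{\infty}|f(x \pm \ii c)|\dd x$ are finite (the hypothesis that the limit in~\eqref{eq:real_axis_integral_2_in_mathprel} is finite guarantees this for all $c$ sufficiently close to $d$, so we only need to work with such $c$). First, I would apply Cauchy's theorem to $f$ on the upper rectangular contour with vertices $-R$, $R$, $R+\ii c$, $-R+\ii c$, since $f$ is analytic on $\domD_d$ and the rectangle sits inside this strip. This gives
\[
\int_{-R}^{R} f(x)\dd x
= \int_{-R}^{R} f(x+\ii c)\dd x
+ \ii\int_0^c f(-R+\ii y)\dd y
- \ii\int_0^c f(R+\ii y)\dd y.
\]

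Next I would let $R\to\infty$. The two vertical pieces are controlled by
$\int_0^c |f(\pm R+\ii y)|\dd y \leq \int_{-d}^{d}|f(\pm R+\ii y)|\dd y$,
which tends to $0$ by assumption~\eqref{eq:imaginary_axis_integral_2_in_mathprel}. Since the right-hand side $\int_{-R}^{R}f(x+\ii c)\dd x$ converges absolutely to $\int_{-\infty}^{\infty}f(x+\ii c)\dd x$ by the choice of $c$, it follows that $\int_{-R}^{R}f(x)\dd x$ converges as $R\to\infty$ and
\[
\int_{-\infty}^{\infty} f(x)\dd x = \int_{-\infty}^{\infty} f(x+\ii c)\dd x.
\]
Repeating the argument on the lower rectangle gives the analogous identity with $-\ii c$ in place of $+\ii c$. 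Averaging these two identities yields
\[
\int_{-\infty}^{\infty}f(x)\dd x = \frac{1}{2}\int_{-\infty}^{\infty}\bigl(f(x+\ii c)+f(x-\ii c)\bigr)\dd x,
\]
and applying the triangle inequality gives
\[
\biggabs{\int_{-\infty}^{\infty}f(x)\dd x}
\leq \frac{1}{2}\int_{-\infty}^{\infty}\bigl(|f(x+\ii c)|+|f(x-\ii c)|\bigr)\dd x.
\]

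Finally I would let $c\to d-0$. The left-hand side does not depend on $c$, and the right-hand side tends to $\tfrac{1}{2}\varLambda(f,d)$ by the definition of $\varLambda(f,d)$ in~\eqref{eq:real_axis_integral_2_in_mathprel}. This yields the claimed inequality and as a byproduct shows that the improper Riemann integral $\int_{-\infty}^{\infty}f(x)\dd x$ exists. The main technical obstacle is the initial justification that $\int_{-\infty}^{\infty}|f(x\pm \ii c)|\dd x$ is finite for at least a sequence of $c$'s approaching $d$ from below; once this is extracted from the finiteness of the limit in~\eqref{eq:real_axis_integral_2_in_mathprel}, the rest of the argument is routine contour deformation combined with the vanishing of the side integrals provided by~\eqref{eq:imaginary_axis_integral_2_in_mathprel}.
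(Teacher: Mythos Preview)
Your proof is correct and follows essentially the same contour-shift argument as the paper: apply Cauchy's theorem on a rectangle in the upper half-strip, let the horizontal extent go to infinity so the side integrals vanish by~\eqref{eq:imaginary_axis_integral_2_in_mathprel}, repeat below, average, take absolute values, and finally let the imaginary shift tend to $d$. The only cosmetic difference is that the paper writes $d_{\varepsilon}=d-\varepsilon$ in place of your $c$ and leaves the final limit $\varepsilon\to 0$ implicit, whereas you spell it out (and correctly note that finiteness of $\varLambda(f,d)$ forces $\int_{-\infty}^{\infty}|f(x\pm\ii c)|\dd x<\infty$ for $c$ near $d$).
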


\begin{proof}
Let $a$ be a positive real number, 
let $\varepsilon$ be a real number with $0 < \varepsilon < d$, and 
let $d_{\varepsilon} = d - \varepsilon$. 
Furthermore, 
we define a contour $C_{a, \varepsilon}$ in $\domD_{d}$ by the rectangle
\begin{align*}
 \{ z = \pm a + \ii y \mid 0 \leq y \leq d_{\varepsilon} \}
\ \cup \ 
\{ z = x \mid -a \leq x \leq a \}
\ \cup \ 
\{ z = x + \ii d_{\varepsilon} \mid -a \leq x \leq a \}
\end{align*}
with counterclockwise direction (see Figure~\ref{fig:cntr_C_a_varepsilon}).
Because $f$ is analytic in $\domD_{d}$, it follows from Cauchy's integral theorem that 
\begin{align}
\notag
\oint_{C_{a, \varepsilon}} f(z) \dd z = 0. 
\end{align}
\begin{figure}[ht]
\begin{center}
\begin{tikzpicture}
\draw[->,>=stealth,semithick] (-4,0)--(4,0)node[above]{$\RR$};
\draw[->,>=stealth,semithick] (0,-0.5)--(0,3)node[above]{$\ii \RR$};
\draw (0,0)node[below left]{$\OO$};
\draw (-3.3,0)node[below]{$-a$};
\draw (3.3,0)node[below]{$a$};
\draw[->,>=stealth,ultra thick] (-3.3,0.5)--(-3.3,0)--(2,0);
\draw[->,>=stealth,ultra thick] (2,0)--(3.3,0)--(3.3,1.5);
\draw[->,>=stealth,ultra thick] (3.3,1.5)--(3.3,2)--(-1.5,2);
\draw[->,>=stealth,ultra thick] (-1.5,2)--(-3.3,2)--(-3.3,0.5);
\draw[thick] (-0.1,2.25)--(0.1,2.25)node[above right]{\small$\ii d$};
\draw[thick] (2,2)--(2.5,2.5)node[above right]{\small$C_{a,\varepsilon}$};
\draw[thick] (0,2)--(-1.5,2.5)node[above left]{\small$\ii d_{\varepsilon}$};
\end{tikzpicture}
\caption{Contour $C_{a,\varepsilon}$}
\label{fig:cntr_C_a_varepsilon}
\end{center}
\end{figure}
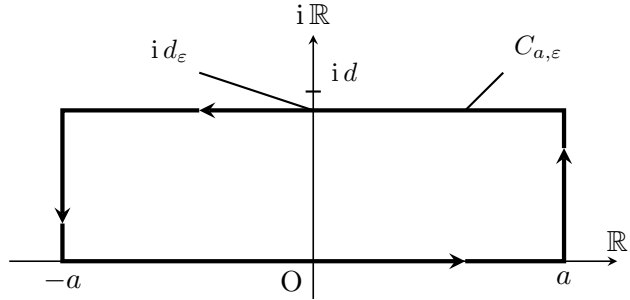
As $a \to \infty$, 
the integrals of $f$ on the vertical parts of $C_{a, \varepsilon}$ converge to $0$
owing to condition \eqref{eq:imaginary_axis_integral_2_in_mathprel}. 
Therefore we have
\begin{align}
\notag
 \int_{-\infty}^{\infty} f(x) \dd x 
 = 
 \int_{-\infty}^{\infty}  f(x + \ii d_{\varepsilon}) \dd x.
\end{align}
Similarly, 
we can derive a variant of this relation in which the sign of $d_{\varepsilon}$ is opposite. 
Therefore, we have
\begin{align*}
\int_{-\infty}^{\infty} f(x)\dd x
=
 \frac{1}{2} \int_{-\infty}^{\infty} 
 \Big(
 f(x + \ii d_{\varepsilon}) + 
 f(x - \ii d_{\varepsilon})
 \Big) \dd x.
\end{align*}
Taking the absolute values of both sides, we obtain the conclusion. 
\end{proof}

Next,
we present a well-known expression of the error of the Sinc approximation
by a contour integral.

\begin{proposition}[Stenger~{\cite[Eq.~(3.1.6)]{Stenger}}]
Let $n$ be a positive integer, 
let $\varepsilon$ be a real number with $0 < \varepsilon < d$, and 
let $\varGamma_{n, \varepsilon}$ be the contour given by 
\begin{align}
\notag
 \{ z = \pm (n+1/2)h + \ii y \mid -d_{\varepsilon} \leq y \leq d_{\varepsilon} \}
\notag
\ \cup \ 
\{ z = x \pm \ii d_{\varepsilon} \mid -(n+1/2)h \leq x \leq (n+1/2)h \}
\end{align}
with counterclockwise direction (see Figure \ref{fig:cntr_for_trpzd}),
where $d_{\varepsilon} = d - \varepsilon$.
When $x$ is contained in the region surrounded by $\varGamma_{n, \varepsilon}$ (i.e., $n$ is sufficiently large), 
\begin{align}
\label{eq:sinc-complex-int}
f(x) - \sum_{k=-n}^n f(kh) \sinc\left(\frac{x - kh}{h}\right)
=
\frac{\sin(\pi x/h)}{2\pi \ii}
\oint_{\varGamma_{n, \varepsilon}}
\frac{f(z)}{(z - x)\sin(\pi z/h)}\dd z
\end{align}
holds. 
\end{proposition}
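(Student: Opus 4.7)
The plan is to apply the residue theorem to the meromorphic integrand $\frac{f(z)}{(z-x)\sin(\pi z/h)}$ inside the rectangle $\varGamma_{n,\varepsilon}$. Since $0<\varepsilon<d$, the contour lies strictly inside $\domD_d$, where $f$ is analytic, so the only singularities enclosed by $\varGamma_{n,\varepsilon}$ are the simple pole at $z=x$ (interior by hypothesis) and the $2n+1$ simple zeros $z=kh$, $k=-n,\ldots,n$, of $\sin(\pi z/h)$. The vertical sides at $\pm(n+1/2)h$ are placed precisely to keep every such zero strictly inside, and to prevent any of them from landing on $\varGamma_{n,\varepsilon}$.

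The key computation is the two residues. At $z=x$ one reads off $f(x)/\sin(\pi x/h)$. At $z=kh$, using $(\sin(\pi z/h))'|_{z=kh}=(\pi/h)(-1)^k$, one obtains $\frac{h(-1)^k f(kh)}{\pi(kh-x)}$. Summing and multiplying by $2\pi\ii$ yields
\[
\oint_{\varGamma_{n,\varepsilon}}\frac{f(z)}{(z-x)\sin(\pi z/h)}\dd z
=2\pi\ii\left\{\frac{f(x)}{\sin(\pi x/h)}+\sum_{k=-n}^{n}\frac{h(-1)^k f(kh)}{\pi(kh-x)}\right\}.
\]

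To match this with the Sinc partial sum on the left-hand side of \eqref{eq:sinc-complex-int}, I would use the elementary identity $\sin(\pi(x-kh)/h)=(-1)^k\sin(\pi x/h)$, which rewrites
\[
\sinc\!\left(\frac{x-kh}{h}\right)=\frac{h(-1)^k\sin(\pi x/h)}{\pi(x-kh)}.
\]
Factoring $\sin(\pi x/h)/\pi$ out of the residue sum then identifies it with $-\sum_{k=-n}^n f(kh)\sinc((x-kh)/h)$. Substituting back and dividing by $2\pi\ii/\sin(\pi x/h)$ gives exactly~\eqref{eq:sinc-complex-int}.

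There is no real obstacle; the whole argument is a one-shot application of the residue theorem, and the only care needed is bookkeeping of the sign $(-1)^k$, which appears both in the residue at $z=kh$ and in the expansion of $\sin(\pi(x-kh)/h)$. One minor point worth noting is the degenerate case $x=k_0 h$, where the integrand has a double pole and $\sin(\pi x/h)=0$; here both sides of~\eqref{eq:sinc-complex-int} are trivially zero, using $\sinc(0)=1$ and $\sinc(\ell)=0$ for nonzero integer $\ell$, so no separate argument is needed.
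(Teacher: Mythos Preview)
Your argument is correct and is precisely the standard residue-theorem derivation of this identity. The paper itself does not supply a proof of this proposition; it is quoted as a known result from Stenger~\cite[Eq.~(3.1.6)]{Stenger}, where the same residue computation is carried out. There is nothing to add or compare.
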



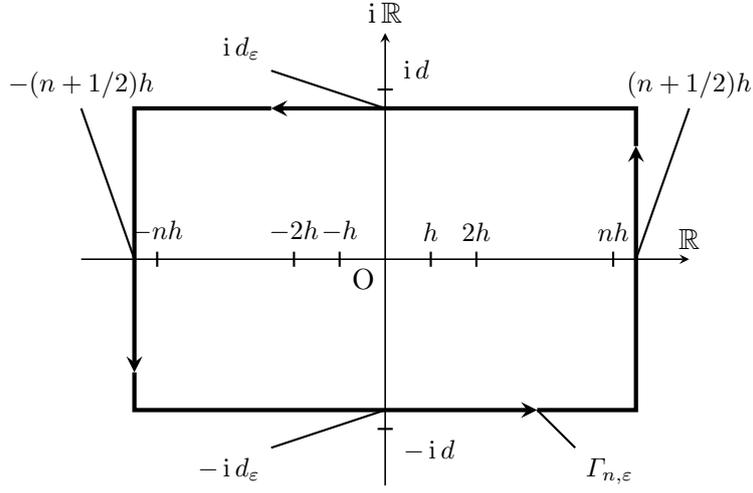
\begin{figure}[ht]
\begin{center}
\begin{tikzpicture}
\draw[->,>=stealth,semithick] (-4,0)--(4,0)node[above]{$\RR$};
\draw[->,>=stealth,semithick] (0,-3)--(0,3)node[above]{$\ii \RR$};
\draw (0,0)node[below left]{$\OO$};
\draw[->,>=stealth,ultra thick] (-3.3,-1.5)--(-3.3,-2)--(2,-2);
\draw[->,>=stealth,ultra thick] (2,-2)--(3.3,-2)--(3.3,1.5);
\draw[->,>=stealth,ultra thick] (3.3,1.5)--(3.3,2)--(-1.5,2);
\draw[->,>=stealth,ultra thick] (-1.5,2)--(-3.3,2)--(-3.3,-1.5);
\draw[thick] (-3,-0.1)--(-3,0.1)node[above]{\small$-nh$};
\draw[thick] (-0.6,-0.1)--(-0.6,0.1)node[above]{\small$-h$};
\draw[thick] (-1.2,-0.1)--(-1.2,0.1)node[above]{\small$-2h$};
\draw[thick] (3,-0.1)--(3,0.1)node[above]{\small$nh$};
\draw[thick] (0.6,-0.1)--(0.6,0.1)node[above]{\small$h$};
\draw[thick] (1.2,-0.1)--(1.2,0.1)node[above]{\small$2h$};
\draw[thick] (-0.1,2.25)--(0.1,2.25)node[above right]{\small$\ii d$};
\draw[thick] (-0.1,-2.25)--(0.1,-2.25)node[below right]{\small$-\ii d$};
\draw[thick] (3.3,0)--(4,2)node[above]{\small$(n+1/2)h$};
\draw[thick] (-3.3,0)--(-4,2)node[above]{\small$-(n+1/2)h$};
\draw[thick] (2,-2)--(2.5,-2.5)node[below right]{\small$\varGamma_{n,\varepsilon}$};
\draw[thick] (0,2)--(-1.5,2.5)node[above left]{\small$\ii d_{\varepsilon}$};
\draw[thick] (0,-2)--(-1.5,-2.5)node[below left]{\small$-\ii d_{\varepsilon}$};
\end{tikzpicture}
\caption{Contour $\varGamma_{n,\varepsilon}$}
\label{fig:cntr_for_trpzd}
\end{center}
\end{figure}

To bound the discretization error of the Sinc indefinite integration, 
we integrate the right-hand side of equality \eqref{eq:sinc-complex-int} and bound it
as shown in the proof of Theorem \ref{thm:sinc_int_on_R_disc_err} below. 
In this bounding, the following lemma is key.

\begin{lemma}
Let 
$x$, $y$, $s$, and $h$ be real numbers
and assume that $h > 0$. 
Then, we have
\begin{align}
\label{eq:bound-w-as-int-sinc-general}
\left|
\frac{1}{2\pi\ii}\int_{-\infty}^{x}\frac{\sin(\pi t/h)}{(s+\ii y) - t}\dd t
\right|
&\leq \frac{3}{4\pi|y|} h\quad (y\neq 0),\\
\label{eq:bound-w-as-int-sinc-special-1}
\left|
\frac{1}{2\pi\ii}\int_{-\infty}^{-(n+1)h}\frac{\sin(\pi t/h)}{(\pm(n+1/2)h+\ii y) - t}\dd t
\right|
&\leq \frac{1}{\pi}\left(2+\frac{5|y|}{h}\right).
\end{align}
Furthermore, 
when $- n h \leq x\leq n h$,  
we have 
\begin{align}
\label{eq:bound-w-as-int-sinc-general-2}
\left|
\frac{1}{2\pi\ii}\int_{-nh}^{x}\frac{\sin(\pi t/h)}{(s+\ii y) - t}\dd t
\right|
&\leq \frac{3}{4\pi|y|} h\quad (y\neq 0),\\
\label{eq:bound-w-as-int-sinc-special-2}
\left|
\frac{1}{2\pi\ii}\int_{-nh}^{x}\frac{\sin(\pi t/h)}{(\pm(n+1/2)h+\ii y) - t}\dd t
\right|
&\leq \frac{1}{\pi}\left(2+\frac{5|y|}{h}\right). 
\end{align}
\end{lemma}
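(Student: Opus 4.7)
The plan is to prove all four bounds by a single integration by parts that uses the oscillation of $\sin(\pi t/h)$ to replace one power of $1/((s+\ii y)-t)$ with the integrable $1/((s+\ii y)-t)^{2}$. Taking $u=1/((s+\ii y)-t)$ and $dv=\sin(\pi t/h)\,\dd t$ (so $v=-(h/\pi)\cos(\pi t/h)$), one gets
\[
\int_{a}^{b}\frac{\sin(\pi t/h)}{(s+\ii y)-t}\dd t
= \left[-\frac{h\cos(\pi t/h)}{\pi((s+\ii y)-t)}\right]_{a}^{b}
- \frac{h}{\pi}\int_{a}^{b}\frac{\cos(\pi t/h)}{((s+\ii y)-t)^{2}}\dd t,
\]
with $(a,b)$ chosen from $\{-\infty,-nh\}\times\{x,-(n+1)h\}$; the contribution at $a=-\infty$ vanishes because $|\cos|\leq 1$ while $|(s+\ii y)-t|\to\infty$.

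For \eqref{eq:bound-w-as-int-sinc-general} and \eqref{eq:bound-w-as-int-sinc-general-2}, the pole $s+\ii y$ lies off the real axis, so $|(s+\ii y)-t|\geq |y|$ for every real $t$. The finite boundary evaluations are therefore bounded by $h/(\pi|y|)$ each, and the residual integral is controlled via $\int_{-\infty}^{\infty}\dd t/|(s+\ii y)-t|^{2}=\pi/|y|$, contributing at most $h/|y|$. After the $1/(2\pi)$ prefactor, these combine under the target $3h/(4\pi|y|)$.

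For \eqref{eq:bound-w-as-int-sinc-special-1} and \eqref{eq:bound-w-as-int-sinc-special-2} the situation changes: the specialization $s=\pm(n+1/2)h$ combined with the integration range forces $|s-t|\geq h/2$, so the effective denominator distance is $h/2$ rather than $|y|$. The boundary term is then bounded by $h/(\pi\cdot h/2)=2/\pi$. For the residual I would split
\[
\frac{1}{((s+\ii y)-t)^{2}}
= \frac{(s-t)^{2}-y^{2}}{((s-t)^{2}+y^{2})^{2}} - \frac{2\ii y(s-t)}{((s-t)^{2}+y^{2})^{2}}
\]
into real and imaginary pieces: the real part, using $|(s-t)^{2}-y^{2}|\leq (s-t)^{2}+y^{2}$ together with the tail estimate $\int_{h/2}^{\infty}\dd u/u^{2}=2/h$, contributes an $O(1)$ constant; the imaginary part is bounded by $|y|$ times the same tail estimate, producing an $O(|y|/h)$ contribution after the $h/\pi$ prefactor. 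Packaging these under the $1/(2\pi)$ prefactor yields the advertised $\frac{1}{\pi}(2+5|y|/h)$.

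The main obstacle is the bookkeeping in the special-point cases \eqref{eq:bound-w-as-int-sinc-special-1} and \eqref{eq:bound-w-as-int-sinc-special-2}. The off-axis distance $|y|$ used in the generic cases is unavailable, and the tighter bound $|s-t|\geq h/2$ must be paired with the oscillatory cancellation in order to handle the full infinite integration range. Tracking how the imaginary part of $1/((s+\ii y)-t)^{2}$ produces the $|y|$-linear term in the final bound, while keeping all constants inside the advertised envelope $\frac{1}{\pi}(2+5|y|/h)$, is the delicate part; the integration by parts itself is a one-line move.
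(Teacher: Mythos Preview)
The paper does not actually prove this lemma; it omits the argument and cites Stenger's Lemma~3.6.3 for \eqref{eq:bound-w-as-int-sinc-general} and Lemmas~4.11--4.12 of Tanaka et al.\ for the remaining three inequalities. Your integration-by-parts scheme is the standard device and is almost certainly what those references employ, so there is no methodological divergence to report.

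There is, however, a concrete arithmetic gap in your treatment of \eqref{eq:bound-w-as-int-sinc-general-2}. That integral has \emph{two} finite endpoints ($-nh$ and $x$), so after one integration by parts the boundary contribution is $2h/(\pi|y|)$, not $h/(\pi|y|)$. Adding the residual bound $h/|y|$ and applying the $1/(2\pi)$ prefactor yields
\[
\frac{1}{2\pi}\left(\frac{2h}{\pi|y|}+\frac{h}{|y|}\right)=\frac{(2+\pi)h}{2\pi^{2}|y|},
\]
and $(2+\pi)/(2\pi)\leq 3/4$ is equivalent to $4\leq\pi$, which is false. Your argument therefore proves \eqref{eq:bound-w-as-int-sinc-general-2} only with the constant $(2+\pi)/(2\pi)\approx 0.82$ in place of $3/4$. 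Recovering the stated constant requires the sharper handling in the cited references; if your only goal is Theorem~\ref{thm:DE3}, the looser constant is harmless, but the lemma as stated is not established by what you wrote.

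A smaller point on \eqref{eq:bound-w-as-int-sinc-special-1} and \eqref{eq:bound-w-as-int-sinc-special-2}: the real/imaginary decomposition is an unnecessary detour. Since $\bigl|1/((s+\ii y)-t)^{2}\bigr|=1/((s-t)^{2}+y^{2})\leq 1/(s-t)^{2}$ and $|s-t|\geq h/2$ throughout the integration range, the residual after one integration by parts is bounded directly by $(h/\pi)\int_{h/2}^{\infty}u^{-2}\,\dd u=2/\pi$, with no $|y|$-dependence whatsoever. Together with at most two boundary terms of size $2/\pi$ and the $1/(2\pi)$ prefactor, this already gives $3/\pi^{2}<2/\pi\leq\tfrac{1}{\pi}(2+5|y|/h)$; the ``delicate bookkeeping'' you flag never actually arises.
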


Because this lemma can be proven similarly to existing results, we omit its proof. 
In fact, 
to obtain inequality~\eqref{eq:bound-w-as-int-sinc-general}, 
we need only to tighten inequality~(3.6.24) of \cite[Lemma 3.6.3]{Stenger} slightly. 
In addition, 
inequalities~\eqref{eq:bound-w-as-int-sinc-special-1}--\eqref{eq:bound-w-as-int-sinc-special-2}
are owing to Lemmas~4.11 and~4.12 of \cite{TanOkaMatSug}. 

Based on the preparations above, 
we can show Theorem~\ref{thm:sinc_int_on_R_disc_err} as follows.


\begin{proof}[Proof of Theorem~\ref{thm:sinc_int_on_R_disc_err}]
First,
we consider the truncated sum with $-n \leq k \leq n$
in the left-hand side of inequality~\eqref{eq:D_error_of_sinc_int}. 
Then,
by letting $n \to \infty$,
we obtain the bound.
To this end,
we partition the integral with the truncated sum as
\begin{align*}
\int_{-\infty}^x \left\{f(t)
- \sum_{k=-n}^n f(kh) \sinc\left(\frac{t - kh}{h}\right)
\right\}\dd t
&=\int_{-nh}^{x}
\left\{f(t)
- \sum_{k=-n}^n f(kh) \sinc\left(\frac{t - kh}{h}\right)
\right\}\dd t\\
&\quad+\int_{-(n+1)h}^{-nh}
 \left\{f(t)
- \sum_{k=-n}^n f(kh) \sinc\left(\frac{t - kh}{h}\right)
\right\}\dd t\\
&\quad+\int_{-\infty}^{-(n+1)h}
 \left\{f(t)
- \sum_{k=-n}^n f(kh) \sinc\left(\frac{t - kh}{h}\right)
\right\}\dd t,
\end{align*}
and estimate each term.
It suffices to consider the case in which $n$ is sufficiently large
that $-nh\leq x\leq nh$ holds.

We begin with the first term.
By integration of equality~\eqref{eq:sinc-complex-int},
the first term is rewritten as
\begin{align*}
 \int_{-nh}^{x} f(t)\dd t
 - \sum_{k=-n}^n f(kh) \int_{-nh}^x \sinc\left(\frac{t - kh}{h}\right)\dd t
=
\oint_{\varGamma_{n, \varepsilon}}
\left\{
\int_{-nh}^{x}
\frac{\sin(\pi t/h)}{2\pi \ii(z - t)}
\dd t
\right\}
\frac{f(z)}{\sin(\pi z/h)}\dd z. 
\end{align*}
Based on the conditions~\eqref{eq:real_axis_integral_2_in_mathprel}
and~\eqref{eq:imaginary_axis_integral_2_in_mathprel}
defining $B(\domD_d)$, 
we can bound the integrals on the horizontal and vertical paths of $\varGamma_{n, \varepsilon}$
when $n \to \infty$ and $\varepsilon\to +0$. 
For the integral on the horizontal paths, 
we use inequality~\eqref{eq:bound-w-as-int-sinc-general-2}. 
In addition, 
for the integral on the vertical paths, 
we use inequalities~\eqref{eq:bound-w-as-int-sinc-special-2} and~\eqref{eq:bound-w-as-int-sinc-general-2}
for the cases $|y|\leq h$ and $|y|> h$, respectively. 
As a result, we have 
\begin{align*}
&\lim_{\substack{n\to\infty\\ \varepsilon\to +0}}\left|
\oint_{\varGamma_{n, \varepsilon}}
\left\{\int_{-nh}^{x}
\frac{\sin(\pi t/h)}{2\pi \ii(z - t)}
\dd t\right\}
\frac{f(z)}{\sin(\pi z/h)}\dd z
\right|\\
&\leq\frac{3h}{4 \pi d \sinh(\pi d/h)}
\lim_{n\to\infty}
\int_{-(n+1/2)h}^{(n+1/2)h}
\left(
 |f(t + \ii d)| + |f(t - \ii d)|
\right) \dd t \\
&\quad +
\frac{1}{\pi}\left(2+\frac{5h}{h}\right)\lim_{n\to\infty}
\int_{|y|\leq h} (|f((n+1/2)h + \ii y)| + |f(-(n+1/2)h + \ii y)|)\dd y\\
&\quad +
\frac{3h}{4\pi h}\lim_{n\to\infty}
\int_{h<|y|\leq d} (|f((n+1/2)h + \ii y)| + |f(-(n+1/2)h + \ii y)|)\dd y\\
&\leq \frac{3h}{4 \pi d \sinh(\pi d/h)}
\lim_{n\to\infty}
\int_{-(n+1/2)h}^{(n+1/2)h}
\left(
 |f(t + \ii d)| + |f(t - \ii d)|
\right) \dd t \\
&\quad + \lim_{n\to\infty}
\frac{7}{\pi}\int_{-d}^d (|f((n+1/2)h + \ii y)| + |f(-(n+1/2)h + \ii y)|)\dd y\\
&=\frac{3h\ee^{-\pi d/h}}{2 \pi d(1 - \ee^{-2\pi d/h})}
\varLambda(f,d) + 0. 
\end{align*}

Next, we bound the third term. 
By complex contour integral, we have
\begin{align*}
& \int_{-\infty}^{-(n+1)h} f(t)\dd t
 - \sum_{k=-n}^n f(kh) J(k,h)(-(n+1)h)\\
&=
\int_{-\infty}^{-(n+1)h}f(t)\dd t
-
\oint_{\varGamma_{n, \varepsilon}}
\left\{
\int_{-\infty}^{-(n+1)h}
\frac{\sin(\pi t/h)}{2\pi \ii(z - t)}
\dd t
\right\}
\frac{f(z)}{\sin(\pi z/h)}\dd z.
\end{align*}
Here, note that $t$ is not contained in the region surrounded by
$\varGamma_{n, \varepsilon}$.
Because it follows from Proposition~\ref{prop:B_D_integrable} that
\[
 \lim_{n\to\infty}\int_{-\infty}^{-(n+1)h}f(t)\dd t
= 0, 
\]
we need only to bound the part of the contour integral similarly to the first term. 
For the integral on the horizontal paths, 
we use the inequality~\eqref{eq:bound-w-as-int-sinc-general}. 
In addition, 
for the integral on the vertical paths, 
we use inequalities~\eqref{eq:bound-w-as-int-sinc-special-1} and~\eqref{eq:bound-w-as-int-sinc-general}
for the cases $|y|\leq h$ and $|y|> h$, respectively. 
As a result, we have 
\begin{align*}
\lim_{\substack{n\to\infty\\ \varepsilon\to +0}}\left|
\oint_{\varGamma_{n, \varepsilon}}
\left\{\int_{-\infty}^{-(n+1)h}
\frac{\sin(\pi t/h)}{2\pi \ii(z - t)}
\dd t\right\}
\frac{f(z)}{\sin(\pi z/h)}\dd z
\right|
\leq\frac{3h\ee^{-\pi d/h}}{2\pi d(1 - \ee^{-2\pi d/h})}
\varLambda(f,d) + 0.
\end{align*}

Finally, 
we bound the second term. 
We replace the contour $\varGamma_{n,\varepsilon}$ with $\varGamma_{n-1,\varepsilon}$. 
Noting that $-(n+1)h\leq t\leq -nh$,
we have
\begin{align*}
&\int_{-(n+1)h}^{-nh}
 \left\{f(t)
- \sum_{k=-n}^n f(kh) \sinc\left(\frac{t - kh}{h}\right)
\right\}\dd t\\
&=\int_{-(n+1)h}^{-nh}
 \left\{f(t)
- f(-nh)\sinc\left(\frac{t + nh}{h}\right)
 - f(nh)\sinc\left(\frac{t - nh}{h}\right)
\right\}\dd t\\
&\quad -
\int_{-(n+1)h}^{-nh}\left\{
 \frac{\sin(\pi t/h)}{2\pi\ii}
\oint_{\varGamma_{n-1,\varepsilon}}
\frac{f(z)}{(z-t)\sin(\pi z/h)}\dd z
\right\}\dd t. 
\end{align*}
Then, 
by using $|\sinc((t - kh)/h))|\leq 1$, we have
\begin{align*}
&\left|
\int_{-(n+1)h}^{-nh}
 \left\{f(t)
- \sum_{k=-n}^n f(kh) \sinc\left(\frac{t - kh}{h}\right)
\right\}\dd t
\right|\\
&\leq
h\left\{\max_{-(n+1)h\leq t\leq -nh}|f(t)|
+|f(-nh)|+|f(nh)|\right\}\\
&\quad+ h \max_{-(n+1)h\leq t\leq -nh}
\left\{
\frac{|\sin(\pi t/h)|}{|2\pi \ii|}
\left|
 \oint_{\varGamma_{n-1,\varepsilon}}\frac{f(z)}{(z-t)\sin(\pi z/h)}
\dd z
\right|\right\}.
\end{align*}
As $n \to \infty$ and $\varepsilon \to 0$, 
the last term of the right-hand side is bounded as
\[
 h \lim_{\substack{n\to\infty\\ \varepsilon\to +0}}
\frac{|\sin(\pi t/h)|}{|2\pi \ii|}
\left|
 \oint_{\varGamma_{n-1,\varepsilon}}\frac{f(z)}{(z-t)\sin(\pi z/h)}
\dd z
\right|
\leq \frac{h \ee^{-\pi d/h}}{\pi d(1 - \ee^{-2\pi d/h})}\varLambda(f,d). 
\]
The other terms tend to $0$ as $n\to\infty$
because $\lim_{t\to\pm\infty} |f(t)| = 0$, 
which follows from Proposition \ref{prop:B_D_integrable}. 

Combining all the bounds, we obtain inequality~\eqref{eq:D_error_of_sinc_int}. 
\end{proof}


\end{document}